\newtheorem{thm}{Theorem} [section]
\newtheorem{lemma}[thm]{Lemma}
\newtheorem{corollary}[thm]{Corollary}
\theoremstyle{definition}
\newtheorem{defn}[thm]{Definition}
\newtheorem{example}[thm]{Example}
\theoremstyle{remark}
\newtheorem{remark}[thm]{Remark}
\begin{document}

\numberwithin{equation}{section}

\newcommand{\hs}{\mbox{\hspace{.4em}}}
\newcommand{\ds}{\displaystyle}
\newcommand{\bd}{\begin{displaymath}}
\newcommand{\ed}{\end{displaymath}}
\newcommand{\bcd}{\begin{CD}}
\newcommand{\ecd}{\end{CD}}

\newcommand{\on}{\operatorname}
\newcommand{\proj}{\operatorname{Proj}}
\newcommand{\bproj}{\underline{\operatorname{Proj}}}

\newcommand{\spec}{\operatorname{Spec}}
\newcommand{\Spec}{\operatorname{Spec}}
\newcommand{\bspec}{\underline{\operatorname{Spec}}}
\newcommand{\pline}{{\mathbf P} ^1}
\newcommand{\aline}{{\mathbf A} ^1}
\newcommand{\pplane}{{\mathbf P}^2}
\newcommand{\aplane}{{\mathbf A}^2}
\newcommand{\coker}{{\operatorname{coker}}}
\newcommand{\ldb}{[[}
\newcommand{\rdb}{]]}

\newcommand{\Sym}{\operatorname{Sym}^{\bullet}}
\newcommand{\Symp}{\operatorname{Sym}}
\newcommand{\Pic}{\on{Pic}}
\newcommand{\Aut}{\operatorname{Aut}}
\newcommand{\PAut}{\operatorname{PAut}}

\newcommand{\too}{\twoheadrightarrow}
\newcommand{\C}{{\mathbf C}}
\newcommand{\Z}{{\mathbf Z}}
\newcommand{\Q}{{\mathbf Q}}
\newcommand{\R}{{\mathbf R}}
\newcommand{\Cx}{{\mathbf C}^{\times}}
\newcommand{\Cbar}{\overline{\C}}
\newcommand{\Cxbar}{\overline{\Cx}}
\newcommand{\cA}{{\mathcal A}}
\newcommand{\cS}{{\mathcal S}}
\newcommand{\cV}{{\mathcal V}}
\newcommand{\cM}{{\mathcal M}}
\newcommand{\bA}{{\mathbf A}}
\newcommand{\cB}{{\mathcal B}}
\newcommand{\cC}{{\mathcal C}}
\newcommand{\cD}{{\mathcal D}}
\newcommand{\D}{{\mathcal D}}
\newcommand{\cs}{{\mathbf C} ^*}
\newcommand{\boldc}{{\mathbf C}}
\newcommand{\cE}{{\mathcal E}}
\newcommand{\cF}{{\mathcal F}}
\newcommand{\bF}{{\mathbf F}}
\newcommand{\cG}{{\mathcal G}}
\newcommand{\G}{{\mathbb G}}
\newcommand{\cH}{{\mathcal H}}
\newcommand{\CI}{{\mathcal I}}
\newcommand{\cJ}{{\mathcal J}}
\newcommand{\cK}{{\mathcal K}}
\newcommand{\cL}{{\mathcal L}}
\newcommand{\baL}{{\overline{\mathcal L}}}
\newcommand{\M}{{\mathcal M}}
\newcommand{\Mf}{{\mathfrak M}}
\newcommand{\bM}{{\mathbf M}}
\newcommand{\bm}{{\mathbf m}}
\newcommand{\cN}{{\mathcal N}}
\newcommand{\theo}{\mathcal{O}}
\newcommand{\cP}{{\mathcal P}}
\newcommand{\cR}{{\mathcal R}}
\newcommand{\Pp}{{\mathbb P}}
\newcommand{\boldp}{{\mathbf P}}
\newcommand{\boldq}{{\mathbf Q}}
\newcommand{\bbL}{{\mathbf L}}
\newcommand{\cQ}{{\mathcal Q}}
\newcommand{\cO}{{\mathcal O}}
\newcommand{\cT}{{\mathcal T}}
\newcommand{\Oo}{{\mathcal O}}
\newcommand{\cY}{{\mathcal Y}}
\newcommand{\OX}{{\Oo_X}}
\newcommand{\OY}{{\Oo_Y}}
\newcommand{\cZ}{{\mathcal Z}}
\newcommand{\otY}{{\underset{\OY}{\ot}}}
\newcommand{\otX}{{\underset{\OX}{\ot}}}
\newcommand{\cU}{{\mathcal U}}\newcommand{\cX}{{\mathcal X}}
\newcommand{\cW}{{\mathcal W}}
\newcommand{\boldz}{{\mathbf Z}}
\newcommand{\qgr}{\operatorname{q-gr}}
\newcommand{\gr}{\operatorname{gr}}
\newcommand{\rk}{\operatorname{rk}}
\newcommand{\Sh}{\operatorname{Sh}}
\newcommand{\SH}{{\underline{\operatorname{Sh}}}}
\newcommand{\End}{\operatorname{End}}
\newcommand{\uEnd}{\underline{\operatorname{End}}}
\newcommand{\Hom}{\operatorname{Hom}}
\newcommand{\uHom}{\underline{\operatorname{Hom}}}
\newcommand{\uHomY}{\uHom_{\OY}}
\newcommand{\uHomX}{\uHom_{\OX}}
\newcommand{\Ext}{\operatorname{Ext}}
\newcommand{\bExt}{\operatorname{\bf{Ext}}}
\newcommand{\Tor}{\operatorname{Tor}}

\newcommand{\inv}{^{-1}}
\newcommand{\airtilde}{\widetilde{\hspace{.5em}}}
\newcommand{\airhat}{\widehat{\hspace{.5em}}}
\newcommand{\nt}{^{\circ}}
\newcommand{\del}{\partial}

\newcommand{\supp}{\operatorname{supp}}
\newcommand{\GK}{\operatorname{GK-dim}}
\newcommand{\hd}{\operatorname{hd}}
\newcommand{\id}{\operatorname{id}}
\newcommand{\res}{\operatorname{res}}
\newcommand{\lrar}{\leadsto}
\newcommand{\im}{\operatorname{Im}}
\newcommand{\HH}{\operatorname{H}}
\newcommand{\TF}{\operatorname{TF}}
\newcommand{\Bun}{\operatorname{Bun}}

\newcommand{\F}{\mathcal{F}}
\newcommand{\Ff}{\mathbb{F}}
\newcommand{\nthord}{^{(n)}}
\newcommand{\Gr}{{\mathfrak{Gr}}}

\newcommand{\Fr}{\operatorname{Fr}}
\newcommand{\GL}{\operatorname{GL}}
\newcommand{\gl}{\mathfrak{gl}}
\newcommand{\SL}{\operatorname{SL}}
\newcommand{\ff}{\footnote}
\newcommand{\ot}{\otimes}
\def\Ext{\operatorname {Ext}}
\def\Hom{\operatorname {Hom}}
\def\Ind{\operatorname {Ind}}
\def\bbZ{{\mathbb Z}}

\newcommand{\nc}{\newcommand}
\nc{\ol}{\overline} \nc{\cont}{\on{cont}} \nc{\rmod}{\on{mod}}
\nc{\Mtil}{\widetilde{M}} \nc{\wb}{\overline} \nc{\wt}{\widetilde}
\nc{\wh}{\widehat} \nc{\sm}{\setminus} \nc{\mc}{\mathcal}
\nc{\mbb}{\mathbb}  \nc{\K}{{\mc K}} \nc{\Kx}{{\mc K}^{\times}}
\nc{\Ox}{{\mc O}^{\times}} \nc{\unit}{{\bf \on{unit}}}
\nc{\boxt}{\boxtimes} \nc{\xarr}{\stackrel{\rightarrow}{x}}

\nc{\Ga}{\G_a}
 \nc{\PGL}{{\on{PGL}}}
 \nc{\PU}{{\on{PU}}}

\nc{\h}{{\mathfrak h}} \nc{\kk}{{\mathfrak k}}
 \nc{\Gm}{\G_m}
\nc{\Gabar}{\wb{\G}_a} \nc{\Gmbar}{\wb{\G}_m} \nc{\Gv}{G^\vee}
\nc{\Tv}{T^\vee} \nc{\Bv}{B^\vee} \nc{\g}{{\mathfrak g}}
\nc{\gv}{{\mathfrak g}^\vee} \nc{\RGv}{\on{Rep}\Gv}
\nc{\RTv}{\on{Rep}T^\vee}
 \nc{\Flv}{{\mathcal B}^\vee}
 \nc{\TFlv}{T^*\Flv}
 \nc{\Fl}{{\mathfrak Fl}}
\nc{\RR}{{\mathcal R}} \nc{\Nv}{{\mathcal{N}}^\vee}
\nc{\St}{{\mathcal St}} \nc{\ST}{{\underline{\mathcal St}}}
\nc{\Hec}{{\bf{\mathcal H}}} \nc{\Hecblock}{{\bf{\mathcal
H_{\alpha,\beta}}}} \nc{\dualHec}{{\bf{\mathcal H^\vee}}}
\nc{\dualHecblock}{{\bf{\mathcal H^\vee_{\alpha,\beta}}}}
\newcommand{\ramBun}{{\bf{Bun}}}
\newcommand{\ramBuno}{\ramBun^{\circ}}

\nc{\Buntheta}{{\bf Bun}_{\theta}} \nc{\Bunthetao}{{\bf
Bun}_{\theta}^{\circ}} \nc{\BunGR}{{\bf Bun}_{G_\R}}
\nc{\BunGRo}{{\bf Bun}_{G_\R}^{\circ}}
\nc{\HC}{{\mathcal{HC}}}
\nc{\risom}{\stackrel{\sim}{\to}} \nc{\Hv}{{H^\vee}}
\nc{\bS}{{\mathbf S}}
\def\Rep{\operatorname {Rep}}
\def\Conn{\operatorname {Conn}}

\nc{\Vect}{{\operatorname{Vect}}}
\nc{\Hecke}{{\operatorname{Hecke}}}

\newcommand{\ZZ}{{Z_{\bullet}}}
\nc{\HZ}{{\mc H}\ZZ} \nc{\eps}{\epsilon}

\nc{\CN}{\mathcal N} \nc{\BA}{\mathbb A}
\nc{\XYX}{X\times_Y X}

\nc{\ul}{\underline}

\nc{\bn}{\mathbf n} \nc{\Sets}{{\on{Sets}}} \nc{\Top}{{\on{Top}}}

\nc{\Simp}{{\mathbf \Delta}} \nc{\Simpop}{{\mathbf\Delta^\circ}}

\nc{\Cyc}{{\mathbf \Lambda}} \nc{\Cycop}{{\mathbf\Lambda^\circ}}

\nc{\Mon}{{\mathbf \Lambda^{mon}}}
\nc{\Monop}{{(\mathbf\Lambda^{mon})\circ}}

\nc{\Aff}{{\on{Aff}}} \nc{\Sch}{{\on{Sch}}}

\nc{\bul}{\bullet}
\nc{\module}{{\operatorname{-mod}}}

\nc{\dstack}{{\mathcal D}}

\nc{\BL}{{\mathbb L}}

\nc{\BD}{{\mathbb D}}

\nc{\BR}{{\mathbb R}}

\nc{\BT}{{\mathbb T}}

\nc{\SCA}{{\mc{SCA}}}
\nc{\DGA}{{\mc DGA}}

\nc{\DSt}{{DSt}}

\nc{\lotimes}{{\otimes}^{\mathbf L}}

\nc{\bs}{\backslash}

\nc{\Lhat}{\widehat{\mc L}}

\newcommand{\Coh}{{\on{Coh}}}

\nc{\QCoh}{QC}
\nc{\QC}{QC}
\nc\Perf{\on{Perf}}
\nc{\Cat}{{\on{Cat}}}
\nc{\dgCat}{{\on{dgCat}}}
\nc{\bLa}{{\mathbf \Lambda}}

\nc{\RHom}{\mathbf{R}\hspace{-0.15em}\on{Hom}}
\nc{\REnd}{\mathbf{R}\hspace{-0.15em}\on{End}}
\nc{\colim}{\on{colim}}
\nc{\oo}{\infty}
\nc\Mod{\on{Mod}}

\nc\fh{\mathfrak h}
\nc\al{\alpha}
\nc\la{\alpha}
\nc\BGB{B\bs G/B}
\nc\QCb{QC^\flat}
\nc\qc{\cQ}

\nc{\fg}{\mathfrak g}

\nc{\fn}{\mathfrak n}
\nc{\Map}{\on{Map}} \nc{\fX}{\mathfrak X}

\nc{\ch}{\check}
\nc{\fb}{\mathfrak b} \nc{\fu}{\mathfrak u} \nc{\st}{{st}}
\nc{\fU}{\mathfrak U}
\nc{\fZ}{\mathfrak Z}

\nc\fk{\mathfrak k} \nc\fp{\mathfrak p}

\nc{\RP}{\mathbf{RP}} \nc{\rigid}{\text{rigid}}
\nc{\glob}{\text{glob}}

\nc{\cI}{\mathcal I}

\nc{\La}{\mathcal L}

\nc{\quot}{/\hspace{-.25em}/}

\nc\aff{\it{aff}}
\nc\BS{\mathbb S}

\nc\Loc{\on{Loc}}
\nc\Tr{{\mc Tr}}
\nc\Ch{{\mc Ch}}

\nc\git{/\hspace{-0.2em}/}
\nc{\fc}{\mathfrak c}
\nc\BC{\mathbb C}
\nc\BZ{\mathbb Z}

\nc\stab{\text{\it st}}
\nc\Stab{\text{\it St}}

\nc\perf{\on{-perf}}

\nc\intHom{\mathcal{H}om}

\nc\gtil{\widetilde\fg}

\nc\mon{\text{\it mon}}
\nc\bimon{\text{\it bimon}}

\def\w{\wedge}

\def\inv{/ \hspace{-0.25em} /}
\def\univ{\mathit{univ}}

\def\aff{\mathit{aff}}
\def\cusp{\mathit{cusp}}
\def\node{\mathit{node}}
\def\fin{\mathit{fin}}


\title[Elliptic Springer theory]{Elliptic Springer theory}

\author{David Ben-Zvi} \address{Department of Mathematics\\University
  of Texas\\Austin, TX 78712-0257} \email{benzvi@math.utexas.edu}
\author{David Nadler} \address{Department of Mathematics\\University
  of California\\Berkeley, CA 94720-3840}
\email{nadler@math.berkeley.edu}

\begin{abstract}
We introduce an elliptic version of the Grothendieck-Springer sheaf and establish elliptic analogues of the basic results of Springer theory.
From a geometric perspective, our constructions specialize geometric Eisenstein series to the resolution of degree zero, semistable $G$-bundles by degree zero $B$-bundles over an elliptic curve $E$. 
From a representation theory perspective, they produce a full embedding of representations of the elliptic or double affine Weyl group into perverse sheaves with nilpotent characteristic variety on the moduli of $G$-bundles over $E$.
The resulting objects are principal series examples of elliptic character sheaves, 
objects expected to play the role of character sheaves for loop groups.
\end{abstract}

\maketitle


\tableofcontents


\section{Introduction}

Let $G$ be a  connected reductive complex algebraic group, $B\subset G$ a Borel subgroup, $N\subset B$ its unipotent radical, and $H=B/N$
the universal Cartan. Let $\fg$, $\fb$, $\fn$, and $\fh$ denote the corresponding Lie algebras. Let $W$ denote the Weyl group.

\subsection{Springer theory}
Let us begin by recalling the standard sheaf-theoretic formulations \cite{L, BoMac, KL}
 of Springer theory \cite{Spr76, Spr78, Spr82}. We will emphasize an interpretation in terms of geometric Eisenstein series~\cite{Laumon, BG} but in the  non-standard setting of singular curves of arithmetic genus one. (See \cite{Fukaya} for a parallel symplectic treatment of
 Springer theory, and \cite{solitons} for a survey of moduli of bundles on cubic curves).
 
\subsubsection{Rational setting} Let $X_\fg$ be the flag variety of Borel subalgebras (or equivalently, subgroups), and $\cN \subset \fg$ the nilpotent cone.
 The Springer resolution 
 $$
 \xymatrix{
 \mu_\cN:\wt \cN \simeq T^* X_\fg  \simeq \{ v\in \fb\} \subset \cN \times X_{\fg}\ar[r] & \cN
 }
 $$ is a semi-small map and so the Springer sheaf $S_\cN = \mu_{\cN!}\BC_{\wt \cN}[\dim\cN]$ is perverse. The endomorphisms of $S_\cN$ as a perverse sheaf are  equivalent to the group algebra $\BC[W]$ of the Weyl group.
 The Grothendieck-Springer resolution 
 $$
 \xymatrix{
 \mu_\fg:\wt \fg \simeq \{ v\in \fb \}  \subset \fg \times X_\fg \ar[r] & \fg
 }
 $$
 is a small map and its restriction to the regular semisimple locus $\fg^{rs} \subset \fg$ is the natural $W$-cover. Hence the Grothendieck-Springer sheaf $S_\fg = \mu_{\fg!}\BC_{\wt \fg}[\dim\fg]$ is the middle-extension of 
 the natural $\BC[W]$-regular local system over $\fg^{rs} \subset \fg$. Thus the endomorphisms of $S_\fg$ as a perverse sheaf are  equivalent to the group algebra $\BC[W]$. Furthermore, under any invariant isomorphism $\fg\simeq \fg^*$,  
one can identify $S_\cN$ and $S_\fg$ as  Fourier transforms of each other \cite{Ginz, HK}.
Thus 
Springer theory compatibly realizes representations of $W$
 as perverse sheaves on $\cN$ and $\fg$.
 
 Taking quotients by the adjoint action of $G$, one can identify the equivariant Grothendieck-Springer resolution with the natural induction map of adjoint quotients
 $$
 \xymatrix{
\mu_\fg:\wt \cN/G \simeq \fb/B  \ar[r] & \fg/G
 }
 $$
For a cuspidal cubic curve $E_\cusp \simeq {\mathbb G}_a \cup \{\infty\}$ (or any simply-connected projective curve of arithmetic genus one),  this admits the interpretation as the induction map 
  $$
 \xymatrix{
\mu_\fg: \Bun^{0}_B(E_\cusp) \ar[r] & \Bun_G^{ss, 0}(E_\cusp)
}
$$
from
  degree zero $B$-bundles to degree zero, semistable $G$ bundles.  Any degree zero, semistable bundle pulls back to the trivial bundle along the normalization map $\mathbb P^1\to E_\cusp$, and the Lie algebra appears as  descent data at the cusp $\{\oo\}$. 
The correspondence of adjoint quotients
   $$
 \xymatrix{
\fh/H & \ar[l]_-{\nu_\fg}  \fb/B  \ar[r]^-{\mu_\fg} & \fg/G
 }
 $$
admits the interpretation as the correspondence of moduli of bundles
  $$
 \xymatrix{
 \Bun^0_H(E_\cusp) & \ar[l]_-{\nu_\fg} \Bun^{0}_B(E_\cusp) \ar[r]^-{\mu_\fg} & \Bun_G^{ss, 0}(E_\cusp)
}
$$ 
The $W$-action on the Grothendieck-Springer sheaf $S_\fg = \mu_{\fg!} \nu_\fg^*\BC_{\fh/H}[\dim \fg]$ reflects the functional equation of the geometric Eisenstein series construction applied to the constant sheaf.

\begin{remark}
In addition to the endomorphisms of $S_\fg$ as a perverse sheaf, the differential graded algebra of  endomorphisms of $S_\fg$ as an equivariant  complex can be calculated
$$
\End_{D^\flat_c(\fg/G)}(S_\fg) \simeq H^*(BH) \rtimes \BC[W] \simeq \on{Sym}^*(\fh^\vee[-2]) \rtimes \BC[W]
$$ 
Thus the full subcategory of $D^\flat_c(\fg/G)$ generated by $S_\fg$ is equivalent to 
 the category of finitely-generated $H^*(BH) \rtimes \BC[W]$-modules.
 (See~\cite{Rider} for a definitive account and mixed version.)

Going further,  the category of finitely-generated $H^*(BH) \rtimes \BC[W]$-modules is equivalent to
 the full subcategory of $D^\flat_c((\fh/H)/W)$ generated by the pushforward $q_!\BC_{\fh/H}$  of the constant sheaf along 
the natural quotient map 
$$
\xymatrix{
q:\fh/H\ar[r] & (\fh/H)/W
}
$$
The geometric Eisenstein series construction $\mu_{\fg!}\nu_{\fg}^*[\dim\fg]$ descends to a fully faithful embedding
$$
\xymatrix{
D^\flat_c((\fh/H)/W) \ar@{^(->}[r] & D^\flat_c(\fg/G)
}
$$ 
which recovers  Springer theory on the full subcategory
generated by $q_!\BC_{\fh/H}$.
(See~\cite{sam} where the theory is in fact established in the group-theoretic setting and for all $\D$-modules.) 
\end{remark}

 
\subsection{Trigonometric setting} Now let us expand our scope to the  group-theoretic Grothendieck-Springer resolution 
 $$
 \xymatrix{
 \mu_G:\wt G   \simeq \{ g\in B \}  \subset G \times X_\fg \ar[r] & G
 }
 $$
 One recovers the  linear Grothendieck-Springer resolution $\mu_{\fg}:\wt\fg\to\fg$  by deforming to the normal cone of the identity of $G$.
Taking quotients by the adjoint action of $G$,
we obtain the  natural induction map of adjoint quotients
 $$
 \xymatrix{
\mu_G:\wt G/G \simeq B/B \ar[r] & G/G
 }
 $$
 For a nodal cubic curve $E_\node\simeq {\mathbb G}_m \cup \{\infty\}$ (or any projective curve of arithmetic genus one with fundamental group  $\BZ$), 
 this admits the interpretation as the induction map 
 $$
 \xymatrix{
\mu_G: \Bun^{0}_B(E_\node) \ar[r] & \Bun_G^{ss, 0}(E_\node)
}
$$
from  degree zero $B$-bundles to degree zero, semistable $G$-bundles. Any degree zero, semistable bundle pulls back to the trivial bundle along the normalization map $\mathbb P^1\to E_\node$, and the group appears as descent data identifying the two preimages of the node $\{\oo\}$.
The correspondence of adjoint quotients
   $$
 \xymatrix{
H/H & \ar[l]_-{\nu_G}  B/B  \ar[r]^-{\mu_G} & G/G
 }
 $$
admits the interpretation as  the correspondence of moduli of bundles
  $$
 \xymatrix{
 \Bun^0_H(E_\node) & \ar[l]_-{\nu_G} \Bun^{0}_B(E_\node) \ar[r]^-{\mu_G} & \Bun_G^{ss, 0}(E_\node)
}
$$ 

Let us focus on the geometric Eisenstein series construction 
$$
\xymatrix{
\mu_{G!}\nu_{G}^*[\dim G]:D_c^\flat(H/H) \ar[r] & D_c^\flat(G/G)
}
$$
applied to $W$-equivariant local systems on $H$.

The fundamental group $\pi_1(H)$ is the coweight lattice $\Lambda_H = \Hom(\Gm, H)$ with spectrum the dual torus $H^\vee = \Spec \BC[\Lambda_H]$.  Thus finite-rank $W$-equivariant local systems on $H$ correspond to 
finite-dimensional representations of the affine Weyl group $W_{\aff} = \Lambda_H\rtimes W$ which in turn correspond to 
$W$-equivariant coherent sheaves on $H^\vee$ with finite support.

Starting from a finite-rank $W$-equivariant local system $\cL$,  the corresponding Grothendieck-Springer sheaf 
$$
S_{G, \cL} = \mu_{G!} \nu_G^*  \cL[\dim G]
$$
is a perverse sheaf 
with endomorphisms  
$$
\End_{\on{Perv}(G/G)} (S_{G, \cL}) \simeq \End_{\Loc(H)}(\cL) \rtimes \BC[W]
$$

\begin{example}
If we begin with  the trivial local system $\cL_0$ with its tautological $W$-equivariant structure, the endomorphisms of the resulting
Grothendieck-Springer sheaf  
$$S_{G, 0} \simeq \mu_{G!} \BC_{\wt G}[\dim G]
$$ 
are equivalent to the group algebra $\BC[W]$. 

 If we begin with the universal local system $\cL_{\univ}$ corresponding to the natural $W_{\aff}$-representation $\BC[\Lambda_H]$ and in turn to the structure sheaf $\cO_{H^\vee}$,
we obtain the universal Grothendieck-Springer sheaf 
$$S_{G, {\univ}} =  \mu_{G!} \nu_G^*  \cL_{univ}[\dim G]
$$
Although $S_{G, \univ}$ is not constructible, it is cohomologically bounded, and informally one can  view it as a perverse sheaf with endomorphisms equivalent to the group algebra $\BC[W_\aff]$. 
\end{example}

\begin{remark} 
Suppose the $W$-equivariant local system $\cL$ comes from a finite-dimensional $\BC[\Lambda_H]^W$-module,
 or in other words, a  coherent sheaf on $H^\vee\inv W$ with finite support. Then we may lift $\cL$ to a module over the Harish Chandra center $\mathfrak Z\fg \simeq \BC[\fh]^W$, or in other words,  to a coherent sheaf on $\fh^\vee\inv W$.
If we view this lift as a generalized eigenvalue for $\mathfrak Z\fg$, the $\D$-module corresponding to $S_{G, \cL}$ is equivalent to the Harish Chandra system of differential equations on $G$
where the  differential operators from $\mathfrak Z\fg$ are prescribed to act by this generalized eigenvalue.  
\end{remark}

In summary, the geometric Eisenstein series construction $\mu_{G!}\nu_{G}^*[\dim G]$ descends to a fully faithful embedding
$$
\xymatrix{
\Loc_\fin(H/W) \simeq \BC[W_{\aff}]\on{-mod}_\fin \simeq \on{Coh}_\fin(H^\vee/W) \ar@{^(->}[r] &  \on{Perv}(G/G)
}$$
with domain finite-rank $W$-equivariant local systems on $H$, or equivalently, finite-dimensional $W_{\aff}$-representations, or equivalently, $W$-equivariant coherent sheaves on $H^\vee$ with finite support, into $G$-equivariant perverse sheaves on $G$.

\begin{remark}
Going beyond abelian categories, one can keep track of the derived structure of $H$-equivariance, and also allow arbitrary $W$-equivariant constructible complexes on $H/H$.  The geometric Eisenstein series construction $\mu_{G!}\nu_{G}^*[\dim G]$ descends to a fully faithful embedding
$$
\xymatrix{
D^\flat_c((H/H)/W) \ar@{^(->}[r] & D^\flat_c(G/G)
}
$$ 
For example, the differential graded algebra of endomorphisms of $S_{G, 0}$ is the twisted product 
$H^*(H/H) \rtimes \BC[W]$, and that of $S_{G, \univ}$ is the twisted product
$H^*(BH) \rtimes \BC[W_{\aff}]$.

(See~\cite{sam} where the theory is in fact established for all $\D$-modules.) 
\end{remark}

\begin{remark}
Continuing with the derived structure of $H$-equivariance in the picture, the derived category of local systems on
$H/H \simeq H \times BH$ is equivalent to modules over $\BC[\Lambda_H] \otimes  \on{Sym}^*(\fh[1])$ via the identification $H_{-*}(H) \simeq \on{Sym}^*(\fh[1])$. Observe that  
the Langlands parameter moduli $\Loc_{H^\vee}(E_\node)$ of $H^\vee$-local systems on $E_\node$
admits the presentation
$$
\Loc_{H^\vee}(E_\node) \simeq H^\vee \times \Spec \on{Sym}^*(\fh[1]) \times BH^\vee
$$ 
Thus one can view the natural domain of the  geometric Eisenstein series embedding as $W$-equivariant 
coherent sheaves with finite support and trivial $H^\vee$-equivariant structure on  $\Loc_{H^\vee}(E_\node)$.
 (A similar interpretation applies in the rational setting discussed earlier, where the moduli  $\Loc_{H^\vee}(E_\cusp)$ is more simply the product $\Spec \on{Sym}^*(\fh[1]) \times BH^\vee$.)
\end{remark}


\subsection{Elliptic Springer theory} 
From a  geometric viewpoint, this paper extends the above story from the rational and trigonometric settings to the elliptic setting
of smooth elliptic curves. Via the restriction of geometric Eisenstein series to degree zero semistable bundles, we introduce an elliptic version of Grothendieck-Springer sheaves  and calculate their endomorphisms.
For degree zero semistable bundles in arithmetic genus one, the induction map from $B$-bundles to $G$-bundles is already proper, 
and so there is no need for the intricacies of Laumon or Drinfeld compactifications.
The Weyl group symmetry of the construction is a simple form of the functional equation and admits a straightforward verification.
The construction also works universally over the moduli of smooth elliptic curves (compatibility with the rational and trigonometric constructions at the boundary).

As discussed below with more detail, our primary motivation stems from understanding character sheaves for loop groups in the guise of perverse sheaves with nilpotent singular support on the moduli of $G$-bundles on a smooth elliptic curve $E$. Within this framework,
the Grothendieck-Springer sheaves produced by the following theorem form the {\em elliptic principal series},
or geometric avatars of principal series representations of loop groups. To simplify the story, let us assume that the derived
group $[G,G]\subset G$ is simply connected.

\begin{thm} For a smooth elliptic curve $E$, there is a fully faithful embedding
$$
\xymatrix{
\cS_E: \BC[W_E]\module_{fin}\ar@{^(->}[r] & \on{Perv}_\cN(Bun^{ss}_G(E))
}
$$
from finite-dimensional representations of the elliptic or  double affine Weyl group
 $$W_E = (\pi_1 (E)\otimes \Lambda_H) \rtimes W$$ 
 to perverse sheaves with nilpotent singular support on the moduli
  of semistable $G$-bundles on $E$.
  \end{thm}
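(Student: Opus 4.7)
The plan is to construct $\cS_E$ as the degree zero semistable restriction of the geometric Eisenstein series functor, mimicking the trigonometric story on the nodal cubic. I begin by identifying finite-dimensional $W_E$-representations with finite-rank $W$-equivariant local systems on $\Bun^0_H(E)$: since $\Bun^0_H(E) \simeq (\on{Pic}^0(E) \otimes_\BZ \Lambda_H) \times BH$ and $\on{Pic}^0(E) \simeq E$, the coarse fundamental group is $\pi_1(E) \otimes \Lambda_H$, so $W$-equivariant local systems on the coarse space are precisely $W_E$-representations. Given such a local system $\cL$, set
$$
\cS_E(\cL) = \mu_{E!}\, \nu_E^* \cL\, [\dim \Bun_G^{ss,0}(E)],
$$
where $\nu_E : \Bun^0_B(E) \to \Bun^0_H(E)$ and $\mu_E : \Bun^0_B(E) \to \Bun_G^{ss,0}(E)$ come from the elliptic version of the adjoint correspondence featured in the introduction.

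The essential geometric inputs are as emphasized in the introduction: $\mu_E$ is already proper on this locus in arithmetic genus one, $\mu_E$ is semi-small and is small over a dense regular locus where it restricts to a Galois $W$-cover, and the two sides have equal dimension. Semi-smallness applied to the local system $\nu_E^* \cL$ yields perversity of $\cS_E(\cL)$, and the $W$-action on the Galois cover extends by middle extension to a $W$-structure on $\cS_E(\cL)$, implementing the functional equation. For nilpotent singular support, $\nu_E^* \cL$ has singular support in the pullback of the zero section of $T^* \Bun^0_H(E)$; properness of $\mu_E$ then forces the singular support of $\cS_E(\cL)$ into the image of the conormal to the fibers of $\nu_E$ under the Hitchin-type moment map, which is precisely the global nilpotent cone of $\Bun_G^{ss,0}(E)$.

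Full faithfulness reduces by base change to computing Homs on the elliptic Steinberg self-intersection
$$
Z_E = \Bun^0_B(E) \times_{\Bun_G^{ss,0}(E)} \Bun^0_B(E).
$$
The key claim is that $Z_E$, as a correspondence over $\Bun^0_H(E) \times \Bun^0_H(E)$, has generic components indexed by $w \in W$ implementing the natural $W$-action, and that extensions across the non-regular boundary contribute no extra Homs. Combined with $W$-equivariance of the $\cL_i$ and Frobenius reciprocity along $W \hookrightarrow W_E$, this identifies
$$
\Hom(\cS_E(\cL_1), \cS_E(\cL_2)) \simeq \bigoplus_{w \in W} \Hom_{\Bun^0_H(E)}(\cL_1, w^* \cL_2) \simeq \Hom_{W_E}(\cL_1, \cL_2).
$$

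The principal obstacle is the analysis of $Z_E$: unlike the rational and trigonometric settings, where the Steinberg variety is controlled by finite-dimensional Bruhat strata on $B \bs G/B$, here $Z_E$ is a global moduli of compatible pairs of $B$-reductions on $E$, and one must both exhibit the $W$ generic components and verify that unstable boundary degenerations contribute only nilpotent directions rather than extra Hom. The simply-connectedness hypothesis on $[G,G]$ keeps the component structure on $\Bun^0_B(E)$ and the automorphism groups of unstable reductions under sufficient control, and it is precisely here that the hypothesis enters.
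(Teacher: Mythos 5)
Your strategy coincides with the paper's: define $\cS_E$ via $\mu_{E!}\nu_E^*$ on the degree zero semistable locus, prove smallness of $\mu_E$ with the regular semisimple locus as the finite $W$-cover, deduce perversity and middle extension, read off Homs from the elliptic Steinberg stack $Z_E$, and get nilpotent singular support from the factorization of the Hitchin map through $H^0(E,\fb^*_\cP)\to\fh^*$. The problem is that the two facts carrying all the weight --- properness of $\mu_E$ and the statement that $Z_E$ has components indexed by $W$, each of the right (zero) dimension --- are asserted rather than proved, and you yourself flag the analysis of $Z_E$ as "the principal obstacle" without resolving it. That analysis is the actual content of the theorem, so as written the argument has a genuine gap.

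The missing idea is a rigidity statement for degree zero reductions of semistable bundles, which is what collapses the global moduli $Z_E$ to finite-dimensional Bruhat geometry. Concretely: (i) if $\cL\subset\cV$ is a degree zero invertible subsheaf of a degree zero semistable bundle, the quotient is automatically locally free (otherwise twisting $\cL$ up produces a positive subsheaf, contradicting semistability) --- this gives properness of $\mu_E$ via the Plucker description of $B$-structures; (ii) two such degree zero line subbundles agreeing at a single point $x\in E$ are equal (the composite $\cL_1\hookrightarrow\cV\to\cV/\cL_2$ would be an injection of degree zero sheaves vanishing at $x$, contradicting (i)). Hence restriction to the fiber at $x$ embeds $\cB_E$ as a closed substack of $\cG_E\times_{BG}BB$, and $Z_E$ maps to $B\bs G/B$; over the Bruhat cell indexed by $w$, with representatives $B_1,B_2$, the fiber is the moduli of degree zero $B_1\cap B_2$-bundles, which is zero-dimensional by induction on the solvable filtration of $B_1\cap B_2$ using triviality of the canonical bundle of $E$ (this is also exactly where the equidimensionality over $\cH_E$ needed for the semi-smallness estimate comes from, and where your worry about "unstable boundary degenerations" gets answered). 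Without (i) and (ii) you have neither properness nor the dimension bound, so you should supply them before the rest of the argument can run. Separately, your final identification $\bigoplus_{w}\Hom(\cL_1,w^*\cL_2)\simeq\Hom_{W_E}(\cL_1,\cL_2)$ is not literally correct --- the left side computes $\End$ of the pushforward, which is the twisted product $\End_{\Loc(\cH_E)}(\cL)\rtimes\BC[W]$, not the $W_E$-equivariant Hom --- so to extract a fully faithful functor on $\BC[W_E]\module_{fin}$ you still need to pass to the appropriate ($W$-isotypic or module-theoretic) piece of $\mu_{E!}\nu_E^*\cL$.
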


\begin{remark}
The domain category $\BC[W_E]\module_{fin}$ of the theorem has two Langlands dual descriptions: on the one hand,  it is equivalent to finite-rank $W$-equivariant local systems on the automorphic moduli $\Bun_H^0(E)$ of degree zero $H$-bundles on $E$;
on the other hand, it is equivalent to  $W$-equivariant coherent sheaves with finite support and trivial  equivariance for automorphisms on the Langlands parameter moduli $\Loc_{H^\vee}(E)$. One can view the theorem as a small but interesting part of the geometric Langlands correspondence for the elliptic curve $E$.
\end{remark}

 \begin{remark}
Geometric Eisenstein series in genus one, as well as modified versions such as the elliptic Grothendieck-Springer sheaves of the theorem, are 
objects of the elliptic Hall category, introduced and studied in depth
by Schiffmann and Vasserot \cite{S1,S2,SV1,SV2,SV3}. Notably, the trace functions of geometric Eisenstein 
series are identified with Macdonald's symmetric functions. The elliptic Hall algebra, the Grothendieck group spanned by 
 geometric Eisenstein 
series,
is identified with a variant of Cherednik's double affine Hecke algebra and related to $K$-groups of Hilbert schemes of points.
One can view elliptic Springer theory as a categorical aspect of elliptic Hall algebras.
\end{remark}

\begin{remark}
With the results of this paper in hand, it is not difficult to adapt the arguments of the trigonometric case~\cite{sam} to understand the monadic structure of the elliptic Grothendieck-Springer construction and enhance the theorem to a statement on the level of derived categories. What results is a fully faithful embedding
$$
\xymatrix{
D^\flat_c(\Bun^0_H(E)/W) \ar@{^(->}[r] & D^\flat_c(\Bun^{ss}_G(E))
}
$$
which recovers the statement of the theorem
at the level of abelian categories of local systems and perverse sheaves.
\end{remark}

For a precise account of the functor $S_E$ and further details, the reader could proceed to Section~\ref{sect main}. We conclude the present introduction with an informal discussion of motivation from the theory of character sheaves, in particular 
for loop groups. This is not needed
to read Section~\ref{sect main} and the arguments in the subsequent sections.

  \subsubsection{Brief recollection about character sheaves}
Our interest in Springer theory stems from its central role in Lusztig's theory of character sheaves~\cite{character 1}.
In the traditional group-theoretic setting, the Grothendieck-Springer sheaves $S_{G, \lambda}$ attached to $W$-invariant characters $\lambda:\Lambda_H\to \BC^*$  provide the geometric avatars of principal series representations of finite groups of Lie type.

%

For each  $W$-invariant character $\lambda:\Lambda_H\to \BC^*$, the  category $\Ch^\lambda_G$ of character sheaves with central character $\lambda$ forms the categorical
Hochschild homology of the  monoidal $\lambda$-monodromic Hecke category $\cH^\lambda_G$. 
Any (sufficiently finite) $\cH^\lambda_G$-module category defines a character object in the categorical Hochschild homology    which is thus a character sheaf. The  Grothendieck-Springer sheaf $S_{G, \lambda}$ is the trace of the unit of $\cH^\lambda_G$, or equivalently, the character of the regular $\cH^\lambda_G$-module. The Grothendieck-Springer sheaves collectively provide the principal series character sheaves
underlying the 
principal series representations of finite groups of Lie type. More strikingly, every character sheaf appears as the trace of an endomorphism of a regular monodromic Hecke module category.  Thus the entire spectrum of finite groups of Lie type is captured by the {\em categorified principal series}.

From the perspective of topological field theory, one can view each monoidal monodromic Hecke category as a quantization of $G$-gauge fields on an interval with $B$-reductions at the end points. From general principles, one expects  its categorical Hochschild homology to be the analogous quantization of $G$-gauge fields on the circle, and thus the appearance of character sheaves 
is not surprising. 
But we will now turn to loop groups where this viewpoint leads to less evident
conclusions.


\subsubsection{Towards character sheaves for loop groups}
While Lusztig's character sheaves account for characters of finite groups of Lie types, for $p$-adic groups, such a theory is still not available but highly desirable. One obvious approach is to pass from $p$-adic groups to loop groups (from mixed to equal characteristic) and then  attempt to follow Lusztig's constructions. In particular, one might hope that the monoidal 
$\lambda$-monodromic affine Hecke categories $\cH^\lambda_{LG}$
would be rich enough to produce all depth zero characters. Unfortunately, with standard techniques, such a direct approach encounters serious obstructions in the infinite-dimensional and codimensional nature of adjoint orbits in loop groups. But as demonstrated most strikingly by Ng\^o in his proof of the Fundamental Lemma~\cite{Ngo},  Hitchin systems over global curves provide finite-dimensional models of this geometry.

Returning to the perspective of topological field theory, and in particular the Geometric Langlands program as a quantization of 
four-dimensional super Yang-Mills theory, one might look specifically to the Hitchin system for an elliptic curve $E$ to model the geometry  of the adjoint quotient of the loop group. Indeed, this is not a new idea: it has precedents in Looijenga's (unpublished) identification of holomorphic $G$-bundles on the Tate curve $E_q$ with twisted conjugacy classes in loop groups (see~\cite{EFK} and Baranovsky-Ginzburg's  refinement for semistable bundles and integral twisted conjugacy classes~\cite{BG}). Thus it is reasonable to look for a theory of character sheaves for loop groups in the geometry of the moduli $\Bun_G(E)$ of $G$-bundles on an elliptic curve $E$. 
(This  is discussed for example by Schiffmann~\cite{S2} who attributes the idea to Ginzburg).

With the preceding motivation and geometric reformulation of character sheaves~\cite{MV, ginzburg} in mind,
we propose the following definition.
 
\begin{defn} Given a smooth elliptic curve $E$, an {\em elliptic character sheaf} is a constructible complex 
on the moduli $\Bun_G(E)$
with singular support in the global nilpotent cone (the zero-fiber of the Hitchin system).
We denote the category of elliptic character sheaves by $\Ch_G(E)$. 
\end{defn}

While it appears difficult to construct a theory of character sheaves from the elliptic picture of twisted conjugacy classes, a more structured approach is available via the degeneration of the Tate curve $E_q \leadsto E_0$ to a nodal elliptic curve and then the passage to its normalization $\mathbb P^1 \simeq \tilde E_0$. This sequence offers an analogue of the horocycle transform for loop groups completely within the setting of finite-dimensional geometry.
Via the geometry of degenerations and normalizations, we expect to show~\cite{elliptic} that the categorical Hochschild homology of the $\lambda$-monodromic affine Hecke category $\cH^\lambda_{LG}$
 is equivalent to the category of elliptic character sheaves with central character $\lambda$. Any (sufficiently dualizable) $\cH^\lambda_{LG}$-module category has a character in the categorical Hochschild homology  which will then be such an elliptic character sheaf. 
 
Finally, the results of this paper independently produce the elliptic Grothendieck-Springer sheaf 
 $
 S_{E, \lambda}  \in D^\flat_c(\Bun^{ss}_G(E))
 $ 
 whose extension by zero is the elliptic character sheaf which  should arise  via the trace of the unit of $\cH^\lambda_{LG}$, or equivalently, the character of the regular $\cH^\lambda_{LG}$-module category.

\subsubsection{Langlands duality/Mirror symmetry}
Let us mention further developments  related to the preceding discussion.

Following Kazhdan-Lusztig \cite{KL, CG}, the affine Hecke algebra admits a Langlands dual presentation as equivariant coherent sheaves on the Steinberg variety of the dual group. Bezrukavnikov \cite{Roma ICM, Roma} categorifies this to an equivalence of the monoidal affine Hecke category (consisting of unipotent bimonodromic sheaves
on the affine flag variety) with equivariant coherent sheaves on the Steinberg variety. 

From this Langlands dual starting point, one can ask what are the geometric avatars of characters.  In joint work with A.~Preygel~\cite{BNP}, we provide the following answer for 
the ``global" version of the affine Hecke category, where we take into account all monodromies at once.
Recall that fixing a basis of $\pi_1(E)$ produces an equivalence from
 the moduli
 ${\Loc}_{G^\vee}(E)$ of dual group local systems on the elliptic curve $E$ to the derived stack of commuting pairs of dual group elements
 up to conjugacy: the two commuting elements are given by the monodromies of a local system around the basis of loops.

\begin{thm}[\cite{BNP}]
The Hochschild homology category of the global affine Hecke category is  equivalent to the derived category 
$D_\cN^\flat\Coh({\Loc}_{G^\vee}(E))$ of coherent sheaves 
with 
 nilpotent singular support
on the moduli
 of dual group local systems on the elliptic curve $E$. 
 \end{thm}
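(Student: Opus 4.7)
The strategy is to pass to the Langlands dual side via Bezrukavnikov's equivalence and then compute the Hochschild homology there, where it becomes a question about coherent sheaves on an explicit derived stack. First, I would invoke a global/monodromic form of Bezrukavnikov's theorem to identify the global affine Hecke category with the derived category of $G^\vee$-equivariant coherent sheaves on the Steinberg variety $\St_{G^\vee} = \widetilde{\fg}^\vee \times_{\fg^\vee} \widetilde{\fg}^\vee$, equipped with its standard convolution monoidal structure. This reduces the problem to computing the Hochschild homology of $\Coh(\St_{G^\vee}/G^\vee)$ as a convolution category.

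Second, I would apply the general principle that for a convolution monoidal category built from the fiber square $X \times_Y X$ of a map $X \to Y$ of stacks, the Hochschild homology is computed by coherent sheaves on the derived relative loop stack $\mathcal{L}_Y X = X \times_{X \times_Y X} X$. With $X = \widetilde{\fg}^\vee/G^\vee \simeq \fb^\vee/B^\vee$ and $Y = \fg^\vee/G^\vee$, I would identify this derived loop stack with $\Loc_{G^\vee}(E)$: the fundamental group $\pi_1(E) = \Z^2$ appears naturally from the doubly-looped structure (one loop from the stack quotient by $G^\vee$, one from the Hochschild homology itself), so that the points of the resulting derived scheme encode pairs of commuting monodromies in $G^\vee$ modulo simultaneous conjugation, recovering the moduli of $G^\vee$-local systems on $E$.

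Third, for the nilpotent singular support condition, I would appeal to the Arinkin--Gaitsgory framework for singular support on quasi-smooth derived stacks. The global affine Hecke category, being built from the Steinberg geometry with its characteristic nilpotent cone, carries a nilpotent support condition that intertwines with Hochschild homology to land on the global nilpotent cone in the cotangent complex of $\Loc_{G^\vee}(E)$. The hardest step, I expect, is precisely this last one: tracking the singular support through the Hochschild homology equivalence and verifying that it pins down exactly $D^\flat_\cN\Coh(\Loc_{G^\vee}(E))$ rather than a larger (ind-)coherent category. This will require careful handling of the quasi-smooth derived structure on $\Loc_{G^\vee}(E)$ and of the monodromic deformation parameters on the Hecke side, and is where most of the technical weight of the proof should lie.
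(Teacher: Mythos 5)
This theorem is not proved in the paper at all: it is imported from the reference \cite{BNP} (``Morita theory of the affine Hecke category and coherent sheaves on the commuting stack,'' listed as in preparation) and stated here purely as motivation for the definition of elliptic character sheaves. There is therefore no proof in this paper to compare your proposal against, and your write-up should be judged only as a plausibility sketch of an external result.

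As such a sketch, your outline is consistent with the strategy the authors themselves gesture at: Bezrukavnikov's coherent realization of the affine Hecke category, computation of the categorical trace/Hochschild homology of a convolution category, identification of the answer with the derived commuting stack (which the paper explicitly identifies with $\Loc_{G^\vee}(E)$ by recording the two monodromies), and the Arinkin--Gaitsgory singular support formalism to account for the nilpotent support condition. Two caveats. First, there is a likely slip in your second step: you take the additive Steinberg $\widetilde{\fg}^\vee\times_{\fg^\vee}\widetilde{\fg}^\vee$ over $Y=\fg^\vee/G^\vee$, but the derived loop space of $\fg^\vee/G^\vee$ parametrizes a pair consisting of one Lie algebra element and one commuting group element, not two commuting group elements; to land on $\Loc_{G^\vee}(E)\simeq \mathcal{L}(G^\vee/G^\vee)$, the ``global'' (all-monodromies) affine Hecke category must be matched with the multiplicative/group version of the Steinberg over $G^\vee/G^\vee$. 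Also, your formula $X\times_{X\times_Y X}X$ for the trace is not the relevant object; under Morita invariance the trace of $\End_{\QCoh(Y)}(\QCoh(X))$ is governed by $\mathcal{L}Y$, not by the self-intersection of the relative diagonal. Second, you correctly locate the real content in the failure of naive Morita theory for coherent (rather than quasi-coherent) sheaves --- this is exactly why the nilpotent singular support condition appears in the answer --- but you do not carry out that step, so the proposal remains an outline rather than a proof. Since the present paper supplies no argument either, none of this can be adjudicated against the source.
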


\begin{remark} 
Coherent sheaves with nilpotent singular support on moduli of local systems were introduced by~\cite{AG} as the natural target
for the spectral side of the geometric Langlands conjecture. 
\end{remark}

\begin{remark}
The commuting stack $\Loc_{G^\vee}(E)$ plays a 
central role in representation theory. We mention two  recent exciting developments. Its $K$-groups
play a central role (as the Langlands dual form of the elliptic Hall algebra) in the work of Schiffmann-Vasserot \cite{SV1,SV2,SV3},
with close ties to the theory of Macdonald polynomials and double affine Hecke algebras. The work of Ginzburg \cite{isospectral} unveils and exploits  a direct link between the (Lie algebra) commuting variety, Cherednik algebras and the Harish Chandra system or Springer sheaf.
\end{remark}

\begin{remark}
It is natural to ask which coherent sheaf on ${\Loc}_{G^\vee}(E)$ corresponds to the universal
elliptic Grothendieck-Springer sheaf, or in other words, the character of the regular ``global" affine Hecke module. 
Following through the constructions, one finds 
the {\em coherent Grothendieck-Springer sheaf} resulting from the pushforward of the
structure sheaf along the induction map $\Loc_{B^\vee}(E)\to  \Loc_{G^\vee}(E)$. (In fact, a slight modification
of this object matches where we take into account the natural  linearity over $H^\vee$.)

In joint work with D.~Helm~\cite{BHN}, we relate a natural $q$-deformation of this sheaf to the representation
theory of affine Hecke algebras, and more broadly, relate all coherent character sheaves with
representations of $p$-adic groups. We expect elliptic and coherent character sheaves to elucidate the depth zero representation theory of $p$-adic groups.

\end{remark}


\section{Main statements}\label{sect main}
Throughout the rest of the paper, 
we will make the simplifying assumption that the derived group $[G,G]\subset G$ is simply connected.
 

Fix an elliptic curve $E$ (or more precisely a smooth projective curve of genus one; our assertions will not involve any choice of a base point). The  geometry  of $G$-bundles on $E$, in particular the stack of  semistable bundles and its coarse moduli,  is well understood, with an extensive literature going back to Atiyah~\cite{A}. We have found the  beautiful sources ~\cite{R75, R96, La, FMW, FM}
to be particularly helpful.

Consider the  Eisenstein diagram of stacks of principal bundles
$$
\xymatrix{
\Bun_G(E) & \ar[l]_-{p_E} \Bun_B(E) \ar[r]^-{q_E} & \Bun_H(E)
}
$$

Recall that $\Bun_H(E)$ is naturally a commutative group-stack, and there is a canonical equivalence
$$
\Bun_H(E) \simeq \Pic(E)\otimes_\Z \Lambda_H 
$$
where $\Lambda_H = \Hom(\Gm, H)$ denotes the coweight lattice of $H$ and $Pic(E)$ denotes the Picard stack.
In particular, the group of connected components $\pi_0\Bun_H(E)$ is canonically isomorphic to $\Lambda_H$.
We  denote the neutral component of degree zero $H$-bundles by
$$
\cH_E \subset \Bun_H(E)
$$


Recall that the Weyl group $W$ naturally acts on $\Lambda_H$ and hence also on $\Bun_H(E)$ and hence in turn on $\cH_E $.
An $H$-bundle is said to be regular if it is not isomorphic to any of its $W$-translates.
We  denote the open substack of  $W$-regular degree zero $H$-bundles by
$$
\cH_E^{r} \subset \cH_E 
$$

Observe that $\pi_0\Bun_G(E)$ is canonically equivalent to $\Lambda_H/R_G$ where $R_G\subset \Lambda_H$ denotes the coroot lattice of $G$. We  say that a $G$-bundle is degree zero if it lies in the connected component of the trivial $G$-bundle. 
Consider   the open substack of degree zero semistable $G$-bundles
$$
\xymatrix{
\cG_E \subset \Bun_G(E)
}
$$
The coarse moduli of $S$-equivalent degree zero semistable $G$-bundles admits the description
$$
\xymatrix{
\cG_E /\text{\{$S$-equivalence\}}   \simeq \cH_E'\git W
}
$$
where $\cH_E'$ denotes the coarse moduli of degree zero $H$-bundles, and we take the geometric invariant theory quotient by $W$.

Consider the further open substack of regular semisimple degree zero semistable $G$-bundles
$$
\xymatrix{
\cG_E^{rs} \subset \cG_E
}
$$
Here regular means their automorphisms are as small as possible (of the dimension of $H$) and semisimple means they are induced from a torus bundle. There is a canonical equivalence
$$
\xymatrix{
\cG_E^{rs}  \simeq \cH_E^r/W
}
$$
where  the $W$-action on $ \cH_E^r$ is free (and so may be interpreted in any fashion).

%
%

\begin{example}
Take $G=SL_2$, so that $W\simeq \BZ/2\BZ$, $H\simeq \Gm$, and $\Lambda_H\simeq \BZ$. 

Then $\cH'_E \simeq E$, and the geometric invariant theory quotient $E\simeq \cH_E' \to \cH'_E\git W \simeq \mathbb P^1$ is the 
usual Weierstrass ramified two-fold cover. The moduli of semistable bundles lying above any of the four points of ramification is equivalent to the adjoint quotient of the nilpotent cone $\cN/G$, and the semistable bundles lying above the complement of the four points form the regular semisimple locus.
\end{example}

Recall that $q_E :\Bun_B(E) \to \Bun_H(E)$ is a bijection on connected components.
We  say that a $B$-bundle is degree zero if it projects to a degree zero $H$-bundle.
We  denote the connected component of degree zero $B$-bundles by
$$
\cB_E \subset \Bun_B(E)
$$
We have the natural restriction of the Eisenstein diagram
$$
\xymatrix{
\cG_E  & \ar[l]_-{\mu_E} \cB_E  \ar[r]^-{\nu_E} & \cH_E
}
$$
We refer to the map $\mu_E$ as the elliptic Grothendieck-Springer resolution.

Recall that a representable map $f:X\to Y$ of irreducible stacks is said to be small  if $f$ is proper, surjective, and for all $k>0$, we have $\on{codim} f(X_k) > 2k$ 
where $X_k\subset X$ denotes the union of fibers of $f$ of dimension $k$.

\begin{thm}\label{main thm}
(1) The elliptic Grothendieck-Springer resolution 
$$
\xymatrix{
\mu_E:\cB_E \ar[r] & \cG_E
}
$$ is a small map.

(2) The restriction of the  elliptic Grothendieck-Springer resolution  to the inverse-image $\cB_{E}^{rs}= \mu^{-1}_E(\cG_E^{rs})$ of the regular semisimple locus fits into the Cartesian diagram of $W$-covers
$$
\xymatrix{
\ar[d]_-\sim \cB_{E}^{rs} \ar[r]^-{\mu_E^{rs}} &\cG^{rs}_E \ar[d]^-\sim \\
\cH^r_E \ar[r] & \cH^r_E/W
}$$

\end{thm}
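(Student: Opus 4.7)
The plan is to address (2) first, since the generic fiber computation there handles the $k=0$ case of the dimension bookkeeping used in (1). Fix $P_H\in\cH_E^r$ and set $P_G=P_H\times_H G\in\cG_E^{rs}$. Degree zero $B$-reductions of $P_G$ are the degree zero sections of the associated flag bundle $\cF=P_H\times_H(G/B)\to E$. I would decompose $G/B$ into Bialynicki--Birula cells $\{X_w^+\}_{w\in W}$ based at the $H$-fixed points $wB/B$; each $X_w^+$ is an $H$-equivariant affine space whose weights are nonzero roots, so $P_H\times_H X_w^+\to E$ is a successive extension of the degree zero line bundles $L_\alpha=P_H\times_{H,\alpha}\BC$. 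Regularity of $P_H$ forces each $L_\alpha$ to be nontrivial, hence $H^0(E,L_\alpha)=0$, and the only section of $P_H\times_H X_w^+$ is the constant one through $wB/B$. This produces exactly $|W|$ sections, the $w$-th giving a $B$-reduction whose associated $H$-bundle is $w\cdot P_H$. To upgrade this closed-point bijection to a stack isomorphism $\nu_E|_{\cB_E^{rs}}:\cB_E^{rs}\risom\cH_E^r$, I would note that the fiber of $\nu_E$ over $P_H$ is modeled by $\mathrm{R}\Gamma(E,P_H\times_H\fn)$, which vanishes for regular $P_H$ since each nontrivial degree zero $L_\alpha$ has $H^0=H^1=0$ by Serre duality. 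The right vertical equivalence $\cG_E^{rs}\simeq\cH_E^r/W$ is already in hand, and the $W$-actions on the two covers match via the description of the sections, giving the Cartesian square of (2).

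For (1) I would split smallness into surjectivity, properness, and a codimension estimate. Surjectivity: any degree zero semistable $P_G$ admits a Jordan--H\"older filtration whose associated graded is a degree zero $H$-bundle, and the filtration itself is a degree zero $B$-reduction. Properness: $\cB_E$ and $\cG_E$ are finite-type algebraic stacks, and the fibers of $\mu_E$ are projective as closed subschemes of the Hilbert scheme of sections of fixed degree of the flag bundle; the valuative criterion holds because in a flat DVR-family the degree of a limit $B$-reduction cannot drop while the $G$-bundle remains semistable of degree zero.

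For the codimension estimate I would stratify $\cG_E$ by Levi type. For each Levi $L\supseteq H$, let $\cG_E^L\subset\cG_E$ be the locus of $P_G$ whose semisimplification corresponds to some $P_H\in\cH_E$ with $W$-stabilizer $W_L$. In the coarse variety $\cH_E'\simeq E\otimes_\BZ\Lambda_H$, this stratum is the open part of $E\otimes_\BZ\Lambda_{Z(L)^\circ}$, of variety dimension $\dim Z(L)$; since the automorphism group on the stratum is $L$, the stacky codimension of $\cG_E^L$ in $\cG_E$ is $\dim L-\dim Z(L)=\dim[L,L]$. The fiber of $\mu_E$ over a point of $\cG_E^L$ is computed by the same Bialynicki--Birula argument, except that the roots $\alpha\in R_L$ now contribute trivial line bundles $L_\alpha=\cO$ with one-dimensional $H^0$; a $\Gm$-equivariant deformation analysis at each fixed-point section identifies the fiber as a disjoint union over $W/W_L$ of copies of the flag variety $L/B_L$, of dimension $|R_L^+|$. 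Smallness then reads
\[
\dim[L,L]\;>\;2\,|R_L^+|\;=\;\dim[L,L]-\mathrm{rk}[L,L],
\]
which reduces to $\mathrm{rk}[L,L]>0$, i.e., $L\neq H$. For $L=H$ the fiber dimension is zero and the estimate is vacuous, so smallness holds on every stratum.

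The main obstacle I anticipate is making the non-regular fiber calculation rigorous with the correct stack structure: one must verify that the Bialynicki--Birula/deformation argument yields the full scheme-theoretic fiber of $\mu_E$ on each stratum and that the identification behaves well in families. I expect the cleanest route is a Luna-slice-type argument showing $\mu_E$ is formally locally isomorphic to the classical group-theoretic Grothendieck--Springer map for the Levi $L$, thereby reducing both the smallness estimate and the properness check to the well-established classical statements.
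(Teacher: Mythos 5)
Your part (2) is essentially the intended argument (the paper declares it ``straightforward,'' and your Bia{\l}ynicki--Birula plus cohomology-vanishing computation is a correct way to carry it out), and your reduction of smallness to properness plus a codimension estimate off the regular semisimple locus matches the paper's general strategy. But for the codimension estimate the paper takes a different and more robust route: instead of stratifying $\cG_E$, it bounds the dimension of the whole Steinberg-type fiber product $\cZ_E=\cB_E\times_{\cG_E}\cB_E$ directly. The key input, which has no analogue in your write-up, is a rigidity lemma: a degree zero line subbundle of a degree zero semistable bundle is determined by its fiber at a single point $x\in E$ (otherwise the induced map $\cL_1\to\cV/\cL_2$ would be a degree zero inclusion of sheaves with a zero at $x$, contradicting semistability). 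Hence restriction to $x$ realizes $\cZ_E$ over $B\backslash G/B$, the piece over the relative position $w$ being the moduli of degree zero $B_1\cap B_2$-bundles, of stack dimension $(g-1)\dim(B_1\cap B_2)=0=\dim\cG_E$. This gives semi-smallness away from the regular semisimple locus in one stroke, with no case analysis; the same rigidity lemma, via the Pl\"ucker description, also gives properness immediately, replacing your Hilbert-scheme/valuative-criterion sketch.

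Your stratification argument, as written, has two genuine problems. First, the stabilizers in $W$ of points of $\cH_E\simeq \Pic^0(E)\otimes\Lambda_H$ are generated by reflections in the \emph{closed} subsystem of roots $\alpha$ with $\alpha(P_H)\cong\cO_E$, which outside type $A$ need not be a Levi subsystem (the Borel--de Siebenthal phenomenon, visible in the Friedman--Morgan description of $\Bun_G(E)$); so the strata are indexed by pseudo-Levis, and the locus $E\otimes\Lambda_{Z(L)^\circ}$ misses the torsion translates --- already for $SL_2$ the $W$-fixed locus is the four $2$-torsion points, not $E\otimes\Lambda_{Z(G)^\circ}=\mathrm{pt}$. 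Second, and more seriously, your codimension count $\dim[L,L]$ and your fiber description $\bigsqcup_{W/W_L}L/B_L$ are each valid only on the \emph{polystable} sublocus of $\cG_E^L$, not on the whole stratum as you defined it (all bundles with the given semisimplification). For $SL_2$ over a $2$-torsion point, the full stratum is $\cN/SL_2$, of codimension $1$ in $\cG_E$, while its polystable sublocus $BSL_2$ has codimension $3$; correspondingly the fiber of $\mu_E$ is $\mathbb{P}^1$ over the polystable point but a single point over the rest of the stratum. So your displayed inequality certifies the smallness bound only over the polystable loci; to finish one must stratify further by the ``unipotent part'' and invoke the classical semi-smallness of the stabilizer's Springer resolution, which is exactly the content you defer to the closing Luna-slice remark (where, note, the local model is the \emph{Lie algebra} Grothendieck--Springer map of the stabilizer, since the deformation space at a polystable bundle is $H^1(E,\fg_P)\simeq\mathfrak{l}_P^*$). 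That reduction can be made to work, but as it stands it is the load-bearing step of your part (1), not a technical afterthought.
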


The proof of the theorem will be given in Section~\ref{sect proofs} after collecting some preliminaries in Section~\ref{sect prelim}.

\begin{remark} Some simple observations:

(1) The theorem and its proof extend universally over the moduli of elliptic curves.

(2) The theorem holds equally well over any projective curve of arithmetic genus one, and in fact the resulting objects are well-known  when no component is a smooth elliptic curve (so that the fundamental group of the curve is less than rank two). For example, over a cuspidal elliptic curve, one finds the Lie algebra version of the  Grothendieck-Springer resolution; and over  any nodal necklace of projective lines, in particular, a nodal elliptic curve, one finds
 the  group version.
 \end{remark}

The fundamental group $\pi_1 (\cH_E)$ is the tensor product  lattice $\pi_1(E) \otimes \Lambda_H$ with spectrum two copies of the dual torus $H^\vee \times H^\vee = \Spec \BC[\pi_1(E) \otimes \Lambda_H]$.  Thus (finite-rank) $W$-equivariant local systems on $\cH_E$ correspond to (finite-dimensional)
representations of the elliptic  Weyl group 
$$W_{E} = (\pi_1(E) \otimes \Lambda_H)\rtimes W
$$ which in turn correspond to 
$W$-equivariant quasicoherent sheaves on $H^\vee \times H^\vee$ (with finite global sections).
Starting from a finite-rank $W$-equivariant local system $\cL$, we define the corresponding elliptic Grothendieck-Springer sheaf by the formula
$$
S_{E, \cL} = \mu_{E!} \nu_E^*  \cL \in D^\flat_c(\cG_E)
$$

For the elliptic curve $E$, the Hitchin system reduces to the natural map $T^*\Bun_G(E) \to \fh^*\git W$ that assigns to a covector $\xi \in H^0(E, \fg^*_\cP)$ at a bundle $\cP\in \Bun_G(E)$ its characteristic polynomial (or equivalently, image under the geometric invariant quotient of the coadjoint action).
We define the nilpotent cone $\cN\subset  T^*\Bun_G(E)$ to be the zero-fiber of the Hitchin system.

\begin{corollary}\label{main cor}
Let $\cL$ be a finite-rank $W$-equivariant local system on $\cH_E$, or equivalently, a finite-dimensional $W_E$-representation.

(1)  The  elliptic Grothendieck-Springer sheaf $S_{E, \cL} $ is the middle-extension
of its restriction  to the regular semisimple locus.

(2)   The elliptic Grothendieck-Springer construction provides a fully faithful embedding
$$
\xymatrix{
\cS_E: \BC[W_E]\module_{fin}\ar@{^(->}[r] & \on{Perv}_\cN(Bun^{ss}_G(E))
}
$$
from finite-dimensional $W_E$-representations to perverse sheaves with nilpotent singular support.

In particular, 
the endomorphism algebra of $ S_{E, \cL}$ as a perverse sheaf is  equivalent to the twisted product $\End_{\Loc(\cH_E)}(\cL) \rtimes \BC[W]$.
\end{corollary}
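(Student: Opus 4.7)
My plan is to deduce the corollary directly from Theorem~\ref{main thm} by combining the smallness of $\mu_E$ with the standard formalism of IC sheaves under small proper maps. To begin, I would note that $\cB_E$, $\cH_E$, and $\cG_E$ are smooth Artin stacks (as open substacks of $\Bun_B(E)$, $\Bun_H(E)$, $\Bun_G(E)$), and that the projection $\nu_E$ is smooth, so $\nu_E^*\cL$ is a local system on $\cB_E$ whose appropriate cohomological shift is the IC sheaf of $\cB_E$ with coefficients in $\nu_E^*\cL$. By Theorem~\ref{main thm}(1), $\mu_E$ is small and proper; by the standard theory of small maps (equivalently the decomposition theorem), $\mu_{E!}$ sends this IC sheaf to the IC sheaf of $\cG_E$, namely the middle extension of its restriction to any open dense substack where $\mu_E$ is \'etale. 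By Theorem~\ref{main thm}(2), the regular semisimple locus $\cG^{rs}_E$ is such a substack, with $\mu_E$ restricting to the free finite \'etale $W$-cover $p:\cH^r_E\to\cH^r_E/W\simeq\cG^{rs}_E$. This already gives part~(1): $S_{E,\cL}$ is perverse and is the middle extension of the local system $p_!(\cL|_{\cH^r_E})$ from $\cG^{rs}_E$.

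For the endomorphism algebra in part~(2), the middle-extension property reduces the problem to a computation on $\cG^{rs}_E$:
\[
\End_{\on{Perv}(\cG_E)}(S_{E,\cL}) \;=\; \End_{\Loc(\cG^{rs}_E)}(p_!\,\cL|_{\cH^r_E}).
\]
I would then invoke the standard calculation for a free finite \'etale $W$-cover: by base change $p^*p_!\,\cL \simeq \bigoplus_{w\in W} w^*\cL$, and the $W$-equivariant structure $\phi_w:w^*\cL\simeq\cL$ identifies each summand with $\End(\cL)$, giving $\End(\cL)\otimes\BC[W]$ as a vector space equipped with the smash-product algebra structure. This is the twisted product $\End_{\Loc(\cH_E)}(\cL)\rtimes\BC[W]$ of the statement (using that $\cH^r_E$ is open dense in $\cH_E$ to identify the endomorphism algebras of $\cL$ with those of its restriction). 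Running the same argument for a pair of local systems identifies $\Hom_{\on{Perv}}(S_{E,\cL_1},S_{E,\cL_2})$ with the appropriate Hom space in the source category $\BC[W_E]\module_{fin}$ of $W$-equivariant local systems on $\cH_E$, yielding the fully faithful embedding $\cS_E$.

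The final ingredient is the nilpotent singular support. Since $\cL$ is a local system, $\nu_E^*\cL$ has singular support equal to the zero section of $T^*\cB_E$, and hence the singular support of $\mu_{E!}\nu_E^*\cL$ is contained in the image under the conormal correspondence $\cB_E\times_{\cG_E}T^*\cG_E\to T^*\cG_E$, that is, in the conormal of $\mu_E$ inside $T^*\cG_E$. Under the Hitchin identification of $T^*\Bun_G(E)$ with Higgs bundles and the identification of the relative cotangent of $\mu_E$ with Higgs fields landing in the nilpotent radical $\fn\subset\fb$, this conormal is precisely the global nilpotent cone $\cN$. The main obstacle I anticipate is a careful treatment of this singular support calculation in the elliptic setting, where $\nu_E$ can have jumping fiber dimension; I would bypass this by computing the conormal of $\mu_E$ intrinsically from the Lie algebra map $\fb\to\fg$, which makes no reference to $\nu_E$.
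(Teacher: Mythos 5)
Your proposal is correct and takes essentially the same route as the paper: parts (1) and the endomorphism/fully-faithfulness statements are deduced from Theorem~\ref{main thm} via the standard smallness/middle-extension formalism and the free \'etale $W$-cover computation on the regular semisimple locus (the paper leaves this implicit, saying only that it ``follows from the theorem''), and the singular support is controlled by the behavior of covectors under $\mu_E$. The paper phrases that last step as the factorization of the Hitchin map through $H^0(E,\fb^*_\cP)\to \fh^*$ together with the pullback map $H^0(E,\fg^*_\cP)\to H^0(E,\fb^*_\cP)$, which is the same computation as your intrinsic identification of the conormal of $\mu_E$ with nilpotent Higgs fields via $\fb\hookrightarrow\fg$.
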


The corollary follows from the  theorem except for the assertion about singular support; this   will be given in Section~\ref{sect proofs} 
with the proof of the theorem.

\begin{remark}
If we begin with the universal local system $\cL_{\univ}$ corresponding to the natural $W_{E}$-representation $\BC[\pi_1(E) \otimes \Lambda_H]$ and in turn to the structure sheaf of $H^\vee \times H^\vee$,
we obtain the universal Grothendieck-Springer sheaf 
$$
S_{E, \cL_{\univ}} =  \mu_{E!} \nu_E^*  \cL_{univ}
$$
Although $S_{E, \cL_\univ}$ is not constructible, it is cohomologically bounded, and informally one can  view it as a perverse sheaf with endomorphisms equivalent to the group algebra $\BC[W_{E}]$.
\end{remark}


\section{Tannakian reminders}\label{sect prelim}

Following the account in~\cite{BG}, we recall a useful way  to translate Borel structures on principal bundles into linear algebra.

We continue with the setup of $G$ a  connected reductive group with simply connected derived group, $B\subset G$ a Borel subgroup, $N\subset B$ its unipotent radical, and $H=B/N$
the universal Cartan.

Let $\check \Lambda_H= \Hom(H, \Gm)$ denote the weight lattice, and $\check\Lambda^+_G \subset\check \Lambda_H$ the cone of dominant weights. For  $\lambda \in \check \Lambda_G$, let $V^\lambda$ denote the  irreducible $G$-representation of highest weight $\lambda$.

Recall that  specifying the Borel subgroup $B\subset G$, or in other words a point of the flag variety of $G$, is equivalent to specifying a collection of lines 
$$
\xymatrix{
L^\lambda \subset V^\lambda, & \mbox{for $\lambda \in \check\Lambda_G$,}
}$$ satisfying the Plucker relations:  the natural  projections 
$$\xymatrix{
V^\lambda \otimes V^\mu \ar[r] & V^{\lambda+ \mu}, & \mbox{for $\lambda, \mu \in \check \Lambda_G$},
}$$ restrict to  isomorphisms 
$$
\xymatrix{
L^\lambda \otimes L^\mu \ar[r]^-\sim &  L^{\lambda + \mu}
}
$$

Now let $X$ be any base scheme. The moduli $\Bun_B(X)$ represents the following data: an $S$-point of $\Bun_B(X)$ is an $S$-point $\xi$ of $\Bun_G(X)$, together with a collection of invertible subsheaves
$$
\xymatrix{
\cL^\lambda \subset \cV_\xi^\lambda, & \mbox{for $\lambda \in \check\Lambda_G$,}
}$$ 
such that the quotients $\cV_\xi^\lambda/\cL^\lambda$ are flat over $S\times X$, and the collection satisfies the Plucker relations:  the natural  projections 
$$\xymatrix{
\cV_\xi^\lambda \otimes \cV_\xi^\mu \ar[r] & \cV_\xi^{\lambda+ \mu}, & \mbox{for $\lambda, \mu \in \check \Lambda_G$},
}$$ restrict to  isomorphisms 
$$
\xymatrix{
\cL^\lambda \otimes \cL^\mu \ar[r]^-\sim &  \cL^{\lambda + \mu}.
}
$$

Now suppose $X$ is a smooth connected projective curve. Unlike the absolute case over a point where the flag variety is connected and proper, the fibers of the map $p:\Bun_B(X) \to \Bun_G(X)$ need not be connected or proper. 
%
Following Drinfeld, to compactify the connected components of the fibers of  $p$, one can consider generalized $B$-structures. The moduli  $\ol \Bun_B(X)$ of generalized $B$-bundles represents the following data: an $S$-point of $\ol\Bun_B(X)$ is an $S$-point $\xi$ of $\Bun_G(X)$, together with a collection of invertible subsheaves
$$
\xymatrix{
\cL^\lambda \subset \cV_\xi^\lambda, & \mbox{for $\lambda \in \check\Lambda_G$,}
}$$ 
such that the quotients $\cV_\xi^\lambda/\cL^\lambda$ are flat relative to $S$ (but not necessarily over $S\times X$), and the collection satisfies the Plucker relations:  the natural  projections 
$$\xymatrix{
\cV_\xi^\lambda \otimes \cV_\xi^\mu \ar[r] & \cV_\xi^{\lambda+ \mu}, & \mbox{for $\lambda, \mu \in \check \Lambda_G$},
}$$ restrict to  isomorphisms 
$$
\xymatrix{
\cL^\lambda \otimes \cL^\mu \ar[r]^-\sim &  \cL^{\lambda + \mu}.
}
$$

The following statements can all be found in~\cite{BG}. There is an evident open embedding $\Bun_B(X) \subset \ol\Bun_B(X)$ whose image is dense. The  projection $q:\Bun_B(X) \to \Bun_H(X)$ extends to a  projection  $\ol q:\ol\Bun_B(X) \to \Bun_H(X)$. 
The  map $p:\Bun_B(X) \to \Bun_G(X)$ extends to a  representable map $\ol p :\ol\Bun_B(X) \to \Bun_H(X)$, 
and the restriction of $\ol p$ to any connected component is proper.


\section{Further arguments}\label{sect proofs}
Let $X$ be a smooth projective curve.

\begin{lemma}\label{no torsion} 
Suppose $\cL\subset \cV$ is the inclusion of a degree zero invertible sheaf into a degree zero  locally free sheaf. If $\cV$ is semistable, then $\cV/\cL$ is locally free,
and so $\cL\subset \cV$ is an inclusion of vector bundles.
\end{lemma}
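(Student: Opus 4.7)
The plan is a short proof by contradiction using semistability. Suppose for contradiction that $\cV/\cL$ has nonzero torsion. Let $\cL'\subset\cV$ denote the saturation of $\cL$ in $\cV$, i.e., the preimage of the torsion subsheaf of $\cV/\cL$ under the quotient $\cV\to\cV/\cL$. By construction $\cL\subsetneq\cL'\subset\cV$ and $\cL'/\cL$ is a nonzero torsion sheaf.

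Since $X$ is a smooth curve, any torsion-free coherent sheaf on $X$ is locally free; in particular, $\cL'$, being a subsheaf of the locally free sheaf $\cV$, is itself locally free. As $\cL'$ contains the line bundle $\cL$ and is contained in the quotient $\cL'/(\text{rank-}1\text{ part})$ picture inside $\cV$, it has rank $1$, so $\cL'$ is a line bundle. Because $\cL'/\cL$ is a nonzero torsion sheaf on a smooth curve, its length is strictly positive, and therefore $\deg\cL' = \deg\cL + \operatorname{length}(\cL'/\cL) > 0$.

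Now $\cV$ is semistable of degree $0$ and some rank $r\geq 1$, so $\mu(\cV) = 0$. But $\cL'\subset \cV$ is a line subbundle (in particular a locally free subsheaf whose quotient is torsion-free, so it does not violate the definition of slope comparison) with $\mu(\cL') = \deg \cL' > 0 = \mu(\cV)$, contradicting the semistability of $\cV$. Hence $\cV/\cL$ has no torsion, and being a torsion-free coherent sheaf on the smooth curve $X$ it is locally free, so $\cL\subset\cV$ is an inclusion of vector bundles.

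There is no real obstacle here: the only subtlety is guaranteeing that the saturation $\cL'$ is actually a line bundle rather than merely a coherent sheaf, which is automatic on a smooth curve because torsion-free coherent sheaves on smooth curves are locally free. Everything else is the standard slope argument for semistability.
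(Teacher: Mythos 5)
Your proof is correct and is essentially the paper's argument made explicit: the paper's one-line proof ("we may twist $\cL$ to obtain a positive locally free subbundle") is exactly your saturation $\cL'$, which is $\cL$ twisted by the effective divisor supporting the torsion of $\cV/\cL$. The only blemish is the garbled sentence justifying $\operatorname{rank}\cL'=1$; the clean reason is simply that $\cL'/\cL$ is torsion, so $\cL'$ has the same rank as $\cL$.
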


\begin{proof}
If $\cV/\cL$ contains torsion, then we may twist $\cL$ to obtain a positive locally free subbundle of $\cV$, a contradiction to the fact that $\cV$ is  degree zero and semistable.
\end{proof}

Let $\cG_{X}$ be the moduli of degree zero, semistable $G$-bundles on $X$. Let
$\cB_X$ be the moduli of degree zero $B$-bundles on $X$, where by degree we mean the degree of the induced $H$-bundle.

\begin{lemma}\label{proper}  The induction map
$
\mu_X:\cB_X \to \cG_X
$ is representable and proper.
\end{lemma}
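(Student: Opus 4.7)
The plan is to bootstrap properness from the Drinfeld compactification $\ol p: \ol\Bun_B(X) \to \Bun_G(X)$ recalled in Section~\ref{sect prelim}, by arguing that over the semistable degree zero locus $\cG_X$ the inclusion of genuine $B$-bundles into generalized $B$-bundles is an isomorphism. Representability is immediate: $\mu_X$ is the restriction of the projection $p: \Bun_B(X) \to \Bun_G(X)$, whose fiber over a $G$-bundle $\cP$ is the scheme of sections of the associated flag bundle $\cP \times^G (G/B)$ over $X$.

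For properness, I would introduce $\ol\cB_X \subset \ol\Bun_B(X)$ as the preimage of $\cG_X$ in the connected component of $\ol\Bun_B(X)$ containing $\cB_X$. The restriction of $\ol p$ to any connected component is proper by the results quoted in Section~\ref{sect prelim}, and since $\cG_X \subset \Bun_G(X)$ is open, the restricted map $\ol\cB_X \to \cG_X$ is proper. As $\cB_X \subset \ol\cB_X$ is an open substack through which $\mu_X$ factors, it then suffices to prove the equality $\cB_X = \ol\cB_X$.

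This equality amounts to showing that for any $S$-point $(\cP, \{\cL^\lambda\}_\lambda)$ of $\ol\cB_X$, each quotient $\cV_\cP^\lambda/\cL^\lambda$ is flat over $S \times X$ rather than merely over $S$. Since the projection $S \times X \to S$ is smooth of relative dimension one, the fiber criterion of flatness reduces this to showing that for every geometric point $s \in S$ the fiber $\cV_{\cP_s}^\lambda/\cL^\lambda|_s$ is locally free on $X$. Here $\cV_{\cP_s}^\lambda$ is semistable of degree zero: semistable by Ramanathan's theorem on associated bundles of a semistable principal bundle, and degree zero because $\cP_s$ lies in the connected component of the trivial $G$-bundle. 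Similarly $\cL^\lambda|_s$ is a line bundle of degree zero, because the identification $\ol q: \ol\Bun_B(X) \to \Bun_H(X)$ places $(\cP_s, \{\cL^\lambda|_s\})$ in the degree zero component of $\Bun_H(X)$, forcing each weight line to have degree zero.

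Lemma~\ref{no torsion} is precisely the input needed to conclude that this quotient is locally free on $X$, so the whole argument rests on that lemma. I expect the only real subtlety to be ensuring that Ramanathan semistability and the propagation of degree zero to associated bundles apply cleanly under our standing hypothesis that $[G,G]$ is simply connected; both are classical facts about principal bundles on curves, but they are the inputs I would want to verify most carefully.
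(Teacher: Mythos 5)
Your proposal is correct and follows essentially the same route as the paper, whose proof is just the compressed statement that the Pl\"ucker presentation gives representability and Lemma~\ref{no torsion} gives properness; you have simply unwound what that means, namely that over the degree zero semistable locus the Drinfeld compactification adds nothing because the quotients $\cV_\xi^\lambda/\cL^\lambda$ are automatically locally free. The points you flag for verification (semistability of the associated bundles $\cV^\lambda$ and the degree zero bookkeeping) are exactly the classical inputs the paper leaves implicit, and they do hold in this setting.
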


\begin{proof}
The Plucker presentation shows $\mu_X$ is representable.  Lemma~\ref{no torsion} then shows it is proper. 
\end{proof}

Let $x\in X$ be a geometric point.

\begin{lemma}\label{determined at point}
Suppose $\cV$ is a degree zero semistable vector bundle, and $\cL_1, \cL_2\subset \cV$ are degree zero  line subbundles.  
If $\cL_1|_x = \cL_2|_x$, then $\cL_1 = \cL_2$.
\end{lemma}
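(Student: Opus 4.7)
My plan is to argue by contradiction, transforming the hypothesis $\cL_1|_x = \cL_2|_x$ into a slope violation by twisting at the single point $x$.

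Suppose $\cL_1 \neq \cL_2$. Since each $\cL_i \subset \cV$ is a saturated line subsheaf (this follows from Lemma~\ref{no torsion}, which also gives that $\cV/\cL_2$ is locally free of degree zero), the quotient map $\cV \to \cV/\cL_2$ restricts on $\cL_1$ to a map $\phi: \cL_1 \to \cV/\cL_2$ that cannot be zero: if it were, then $\cL_1$ would be contained in $\cL_2$, and by saturation we would have $\cL_1 = \cL_2$. First I would check that the quotient $\cW := \cV/\cL_2$ is itself semistable of slope zero. This is standard: if $\cF \subset \cW$ had $\mu(\cF) > 0$, its preimage $\tilde\cF \subset \cV$ would satisfy $\deg \tilde\cF = \deg\cF > 0$ and hence $\mu(\tilde\cF) > 0$, contradicting semistability of $\cV$.

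Next I would use the pointwise hypothesis. The equality $\cL_1|_x = \cL_2|_x$ inside $\cV|_x$ means the composition $\cL_1|_x \to \cV|_x \to \cW|_x$ vanishes, i.e., $\phi$ vanishes at $x$. Therefore $\phi$ lies in the subspace
\[
H^0(X, \cL_1^\vee \otimes \cW \otimes \cO(-x)) \subset H^0(X, \cL_1^\vee \otimes \cW),
\]
and so factors through a nonzero map $\phi': \cL_1(x) \to \cW$. Since $\cW$ is torsion-free and $\cL_1(x)$ has rank one, $\phi'$ is injective, so its image is a subsheaf of $\cW$ of degree $\deg \cL_1(x) = 1 > 0 = \mu(\cW)$. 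This contradicts the semistability of $\cW$ established above, so we must have $\cL_1 = \cL_2$.

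The only nontrivial step is the semistability of $\cW = \cV/\cL_2$, but the preimage argument dispatches this in one line; the rest is formal. The core idea I would emphasize is the twist-by-$\cO(x)$ trick converting ``vanishing at $x$'' into a slope bump of exactly $+1$, which is incompatible with the semistable, degree-zero target.
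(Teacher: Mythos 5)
Your proof is correct and follows essentially the same route as the paper's: both pass to the composition $\cL_1 \to \cV/\cL_2$, observe it is nonzero but vanishes at $x$, and derive a contradiction with degree-zero semistability. The paper simply compresses your explicit twist-by-$\cO(x)$ step and the semistability of the quotient into a citation of Lemma~\ref{no torsion}, whose own proof is the same twisting trick you spell out.
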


\begin{proof}
Suppose not. Then the composition $\cL_1 \subset \cV \to \cV/\cL_2$ is an inclusion of a degree zero invertible sheaf into a degree zero semistable locally free sheaf. By Lemma~\ref{no torsion}, the composition must be an inclusion of vector bundles, a contradiction to the fact that it must also have a zero at $x$.
\end{proof}

Let $\cG_{X, x} = \cG_E \times_{BG} BB$ be the moduli of a degree zero semistable $G$-bundle $\xi$  together with a flag in the fiber $\xi|_x$. 

\begin{lemma}\label{fiber flag}
The natural map $r_X:\cB_X \to \cG_{X, x}$ is a closed embedding.
\end{lemma}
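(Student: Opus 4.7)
The plan is to use the Tannakian/Plucker presentation of Section~\ref{sect prelim} to translate both $\cB_X$ and $\cG_{X,x}$ into ``line subbundle'' data, and then show $r_X$ is a proper monomorphism, which for representable morphisms of algebraic stacks is equivalent to being a closed embedding.

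\smallskip

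First I would set up the Tannakian picture. An $S$-point of $\cB_X$ is a degree zero $G$-bundle $\xi$ on $S\times X$ (necessarily semistable, since the induced $G$-bundle of a degree zero $B$-bundle is degree zero and semistable --- the associated bundles $\cV^\lambda_\xi$ being semistable by Ramanathan's theorem) together with a Plucker collection of line subbundles $\cL^\lambda \subset \cV^\lambda_\xi$ of degree zero. An $S$-point of $\cG_{X,x}$ is such an $\xi$ together with a Plucker collection of lines $\ell^\lambda \subset \cV^\lambda_\xi|_{S\times\{x\}}$ in the fibers at $x$. The map $r_X$ is then simply restriction: $(\xi,\{\cL^\lambda\})\mapsto(\xi,\{\cL^\lambda|_x\})$.

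\smallskip

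Next I would observe that $r_X$ is proper. The map $\mu_X:\cB_X\to\cG_X$ is proper by Lemma~\ref{proper}, and it factors as $r_X$ followed by the representable separated (in fact projective, being a flag bundle) map $\cG_{X,x}\to\cG_X$; hence $r_X$ is proper. It then remains to show $r_X$ is a monomorphism, i.e.\ fully faithful on $S$-points for every test scheme $S$. For faithfulness, a morphism between two $B$-bundles is a morphism of underlying $G$-bundles preserving each Plucker line, and such morphisms are trivially unchanged upon passing to the fiber at $x$. For fullness and injectivity on objects, suppose two Plucker collections $\{\cL_1^\lambda\}$ and $\{\cL_2^\lambda\}$ on a common $\xi$ satisfy $\cL_1^\lambda|_x=\cL_2^\lambda|_x$ for all $\lambda$. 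Each $\cV^\lambda_\xi$ is degree zero semistable and each $\cL_i^\lambda$ is degree zero, so Lemma~\ref{determined at point} gives $\cL_1^\lambda=\cL_2^\lambda$. Thus $r_X$ is a proper monomorphism, hence a closed embedding.

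\smallskip

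The main obstacle is confirming that Lemma~\ref{determined at point} actually applies in the relative setting --- i.e., making sure that over an arbitrary base $S$ the associated bundles $\cV^\lambda_\xi$ are fiberwise semistable of degree zero and the $\cL^\lambda$ are fiberwise degree zero, so that the ``determined at $x$'' principle can be checked fiberwise over $S$. This reduces to invoking Ramanathan's theorem that semistability is preserved under associated constructions for semistable $G$-bundles with simply connected derived group, together with the elementary observation that the degree of $\cL^\lambda$ is the pairing of $\lambda$ with the degree of the underlying $H$-bundle, which vanishes on the component $\cB_X$.
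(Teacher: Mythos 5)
Your proof is correct and follows the same route as the paper, whose entire argument is ``apply Lemma~\ref{determined at point} to the Plucker presentation'': the crux in both cases is that a degree zero line subbundle of a degree zero semistable bundle is determined by its fiber at $x$. You have simply filled in the routine surrounding details (properness via factoring $\mu_X$ through the flag bundle $\cG_{X,x}\to\cG_X$, the proper-monomorphism criterion, and the fiberwise reduction over a base $S$) that the paper leaves implicit.
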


\begin{proof}
Apply Lemma~\ref{determined at point} to the Plucker presentation.
\end{proof}

Let $\cZ_X = \cB_X \times_{\cG_X} \cB_X$ be the moduli of a pair of degree zero $B$-bundles together with an isomorphism of their induced $G$-bundles. 

\begin{lemma}\label{ell steinberg}
If  $X$ is  an elliptic curve, then  the irreducible components of $\cZ_X$ are in natural bijection with the Weyl group $W,$ and 
the dimension of each
irreducible component is  zero. 
\end{lemma}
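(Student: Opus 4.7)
My plan is to identify the irreducible components of $\cZ_E$ with those of the open substack $\cZ_E^{rs} := \pi^{-1}(\cG_E^{rs})$, where $\pi \colon \cZ_E \to \cG_E$ is either projection, whose structure is given explicitly by Theorem~\ref{main thm}(2). Under that theorem, $\cB_E^{rs} \to \cG_E^{rs}$ is the $W$-torsor $\cH_E^r \to \cH_E^r/W$, so
$$
\cZ_E^{rs} \;\simeq\; \cH_E^r \times_{\cH_E^r/W} \cH_E^r \;\simeq\; \coprod_{w \in W} \cH_E^r,
$$
the $w$-th copy parametrizing pairs $(\cH,\, w\cdot \cH)$ with canonical identification of the induced $G$-bundles. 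This provides $|W|$ canonical zero-dimensional irreducible components of $\cZ_E^{rs}$, naturally labeled by $W$.

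Next I would establish that $\cZ_E$ has stacky dimension at most $0$ everywhere. The stacks $\cB_E$ and $\cG_E$ are smooth of stacky dimension $(g-1)\dim B = 0$ and $(g-1)\dim G = 0$ respectively, since $E$ has arithmetic genus one, and $\mu_E$ is proper by Lemma~\ref{proper} and small by Theorem~\ref{main thm}(1). Smallness controls the jump of fiber dimension against codimension in $\cG_E$, so that $\cZ_E = \cB_E \times_{\cG_E} \cB_E$ is equidimensional of stacky dimension $0$, with the components of $\cZ_E^{rs}$ realizing this bound.

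The crux is to rule out additional irreducible components of $\cZ_E$ sitting entirely in the boundary $\pi^{-1}(\cG_E \setminus \cG_E^{rs})$. Over any non-regular-semisimple point, the automorphism group is a Levi subgroup $L \supsetneq H$ with $\dim L > \dim H$, while the coarse moduli codimension of the non-regular-semisimple locus in $\cG_E$ is at least $1$; these combine to force $\cG_E \setminus \cG_E^{rs}$ to have stacky dimension strictly less than $\dim \cG_E = 0$. Properness of $\pi$ together with the smallness bounds on higher-dimensional fibers of $\mu_E$ then give $\dim \pi^{-1}(\cG_E \setminus \cG_E^{rs}) < 0$, so no $0$-dimensional component of $\cZ_E$ can be contained in the boundary, and every component meets $\cZ_E^{rs}$. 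Taking closures bijects the irreducible components of $\cZ_E$ with those of $\cZ_E^{rs}$, yielding exactly $|W|$ components of dimension $0$, labeled by $W$. The main obstacle is this dimension bookkeeping on $\cG_E \setminus \cG_E^{rs}$, which requires combining the positive coarse codimension of the non-regular-semisimple stratum with the strict jump in automorphism dimension to push the total stacky dimension strictly below $0$.
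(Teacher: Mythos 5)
There is a genuine circularity in your argument. Both of your key dimension-counting steps invoke Theorem~\ref{main thm}(1), the smallness of $\mu_E$: first to conclude that $\cZ_E=\cB_E\times_{\cG_E}\cB_E$ is equidimensional of stacky dimension $0$, and again to conclude that $\pi^{-1}(\cG_E\setminus\cG_E^{rs})$ has dimension $<0$. But in the paper the logical order is the reverse: smallness of $\mu_E$ is \emph{deduced} from the present lemma (together with Lemmas~\ref{proper} and~\ref{spread out}), via the general criterion that semi-smallness over the boundary follows from a dimension bound on the self-fiber product. The entire content of the lemma is precisely the unconditional bound $\dim\cZ_E\le 0$, including over the non-regular-semisimple locus, so you cannot obtain it as a consequence of smallness. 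Your description of $\cZ_E^{rs}\simeq\coprod_{w\in W}\cH_E^r$ via Theorem~\ref{main thm}(2) is fine (that part is proved independently), but it only produces the $|W|$ candidate components; it gives no control over what sits over the boundary.

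What is needed, and what the paper supplies, is a direct computation valid over all of $\cG_E$: fix a point $x\in E$ and stratify $\cZ_E$ by the relative position in $B\backslash G/B$ of the two flags at $x$. Lemma~\ref{fiber flag} (a degree zero $B$-reduction of a semistable degree zero bundle is determined by its flag at $x$) identifies the stratum over the Bruhat cell $Y_w$ with the moduli of degree zero $B_1\cap B_2$-bundles for Borels $B_1,B_2$ in relative position $w$, and this moduli has dimension zero by induction on the solvable filtration of $B_1\cap B_2$, using only that the canonical bundle of $E$ is trivial (so each line-bundle extension step contributes $h^1-\dim\on{aut}=0$). That computation is the irreplaceable step your proposal is missing; with it in hand, the bijection of components with $W$ and the vanishing of their dimensions follow at once, and only then does smallness of $\mu_E$ follow.
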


\begin{proof}
Fix $x\in X$ a geometric point, and consider the natural restriction map $\pi_X:\cZ_X \to BB \times_{BG} BB \simeq B\bs G/B.$ By Lemma~\ref{fiber flag}, for a given relative position $Y_w \subset B\bs G/B$ represented by two Borel subgroups $B_1 , B_2\subset G$, the inverse image $\pi_X^{-1}(Y_w)$ is equivalent to the moduli of degree zero ${B_1\cap B_2}$-bundles. It is simple to check this moduli has the correct dimension by induction on the solvable filtration of ${B_1\cap B_2}$. One need only use that the canonical bundle of $X$ is trivial.
\end{proof}

Consider the composition $q_{1}:\cZ_X \to \cB_X \to \cH_X$ where the first map is projection along the first factor and the second is the usual projection.

\begin{lemma}\label{spread out}
The restriction of $q_1$ to each irreducible component of $\cZ_X$ has equidimensional fibers.
\end{lemma}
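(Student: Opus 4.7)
The plan is to invoke the identification from Lemma~\ref{ell steinberg} of each irreducible component of $\cZ_X$ with a moduli of $B_1 \cap B_2$-bundles, and then to exploit the unipotence of the kernel of the relevant induction map to $\cH_X$.

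For each $w \in W$, Lemma~\ref{ell steinberg} furnishes an equivalence between the $w$-indexed irreducible component of $\cZ_X$ and $\Bun^0_{B_1 \cap B_2}(X)$, where $B_1, B_2 \subset G$ are any two Borels in relative position $w$, which I take to share a common maximal torus $T$. Then $B_1 \cap B_2 = T \ltimes (N_1 \cap N_2)$, and the projection $B_1 \cap B_2 \to H = B_1/N_1$ factors as $B_1 \cap B_2 \to T \xrightarrow{\sim} H$ with unipotent kernel $N_1 \cap N_2$. Under the equivalence, the restriction of $q_1$ to this component is identified with the induction map
$$
\iota_w: \Bun^0_{B_1 \cap B_2}(X) \longrightarrow \cH_X.
$$

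To establish equidimensional fibers, I filter $N_1 \cap N_2$ by its root subgroups, expressing $\iota_w$ as an iterated tower whose successive relative stacks are (twisted) $\Ga$-bundle stacks. Over a point $\xi_H \in \cH_X$, the fiber at each step is modeled on $R\Gamma(X, L_\alpha)[1]$, where $L_\alpha$ is the degree zero line bundle obtained by applying the root $\alpha$ to $\xi_H$. The main technical subtlety is that $h^0$ and $h^1$ of $L_\alpha$ jump as $\xi_H$ varies: both vanish when $L_\alpha$ is nontrivial, but both equal one when $L_\alpha \simeq \cO_X$. This is why the set-theoretic fiber dimensions look irregular on the nose.

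The saving grace is that $X$ is a smooth elliptic curve, hence has trivial canonical bundle, forcing $\chi(X, L_\alpha) = h^0 - h^1 = 0$ uniformly. Consequently the stack-theoretic contribution at each step, namely $h^1 - h^0 = 0$, is constant in $\xi_H$. Iterating over the root filtration shows that every nonempty fiber of $\iota_w$ has constant stack dimension zero, yielding the desired equidimensionality of $q_1$ restricted to each irreducible component of $\cZ_X$.
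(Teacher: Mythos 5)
Your proposal is correct and is essentially the paper's argument: the paper's proof simply points back to the identification of each component with a moduli of degree zero $B_1\cap B_2$-bundles and the induction on the solvable filtration ``using only that the canonical bundle of $X$ is trivial,'' which is exactly the root-subgroup tower and $\chi(X,L_\alpha)=0$ computation you spell out. You have merely made explicit the details the paper leaves to the reader.
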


\begin{proof}
Follows from the proof of Lemma~\ref{ell steinberg} where the components are shown to be moduli of degree zero ${B_1\cap B_2}$-bundles. 
\end{proof}

A representable map $f:X\to Y$ of irreducible stacks is said to be small (or semi-small) if $f$ is proper, surjective, and for all $k>0$, we have $\on{codim} f(X_k) > 2k$ (or $\on{codim} f(X_k) \geq 2k$)
where $X_k\subset X$ denotes the union of fibers of $f$ of dimension $k$. 

We will employ the following general strategy 
to establish a map $f:X\to Y$ is small.

First, suppose $f$ is proper, surjective,   and $V\subset Y$ is an open substack such that $f:U = X \times_Y V \to V$ is finite.
Let $X'= X\setminus U$, $Y' = Y \setminus V$, and $g:X'\to Y'$ be the restriction of $f$. It is immediate from the definitions that if $g$ is semi-small, then $f$ is small. 

Second, 
to see $g:X' \to Y'$ is semi-small, it suffices to show that the dimension of each irreducible component of $Z = X'\times_{Y'} X'$ is less than or equal to $\dim Y$.

\begin{proof}[Proof of Theorem~\ref{main thm}]
Assertion (2) is straightforward to check.  
Then Lemmas~\ref{proper}, \ref{ell steinberg}, and \ref{spread out} show that the restriction of $\mu_X$ to the complement of the regular semisimple locus is semi-small. This then in turn establishes  assertion (1).
\end{proof}

\begin{proof}[Proof of Corollary~\ref{main cor}]
It only remains to establish the assertion about singular support. At a $B$-bundle $\cP$, the pullback of covectors along $\mu_E$ is given by 
$H^0(E, \fg^*_\cP) \to H^0(E, \fb^*_\cP)$. The Hitchin system factors through the natural projection $H^0(E, \fb^*_\cP)\to \fh^*$. 
Thus the construction $S_E$ will take local systems, i.e.~sheaves with vanishing singular support on $\cH_E$, to perverse sheaves with nilpotent singular support on $\cG_E$.
\end{proof}

\section{Acknowledgements}
We would like to thank Sam Gunningham for explaining his viewpoint on Springer theory and Eisenstein series.
We would also like to  thank Dennis Gaitsgory and Tom Nevins for stimulating discussions about Eisenstein series and representation
theory in an elliptic setting, respectively. We gratefully acknowledge the support of NSF grants DMS-1103525 (DBZ)
and DMS-1319287 (DN).



\begin{thebibliography}{BBDG}


\bibitem[AG]{AG} D. Arinkin and D. Gaitsgory, Singular support of
  coherent sheaves, and the geometric Langlands conjecture.
  arXiv:1201.6343.
  
  
\bibitem[A57]{A} M. Atiyah, Vector bundles over an elliptic curve, Proc. London Math. Soc. 7 (1957), 414Ð452. 


\bibitem[BG96]{BG}
V. Baranovsky and V.Ginzburg, Conjugacy  Classes in Loop Groups and G-Bundles on Elliptic Curves,  Internat. Math. Res. Notices, No 15 (1996)  734--751.

\bibitem[BD]{BD} A. Beilinson and V. Drinfeld, ``Quantization
of Hitchin Hamiltonians and Hecke Eigensheaves," preprint.


\bibitem[BFN10]{BFN}
D. Ben-Zvi, J. Francis, and D. Nadler, 
Integral Transforms and Drinfeld Centers in Derived Algebraic Geometry, J. Amer. Math. Soc. 23 (2010), 909--966.

\bibitem[BHN]{BHN}
D. Ben-Zvi, D. Helm and D. Nadler, Deligne-Langlands in Families and the Coherent Springer Sheaf. In preparation.


\bibitem[BN09]{character}
D. Ben-Zvi and D. Nadler, The Character Theory of a Complex Group, arXiv:0904.1247.


\bibitem[BN]{elliptic}
D. Ben-Zvi and D. Nadler, Elliptic Character Sheaves, in progress.


\bibitem[BNP]{BNP}
D. Ben-Zvi,  D. Nadler, and A. Preygel,
Morita theory of the affine Hecke category
and coherent sheaves on the commuting stack. In preparation.

\bibitem[BNe08]{solitons} D. Ben-Zvi and T. Nevins, From solitons to many-body systems.
 Pure Appl. Math. Q. 4 (2008), no. 2, Special Issue: In honor of Fedor Bogomolov. Part 1, 319Ð361. 

\bibitem[Be06]{Roma ICM} R. Bezrukavnikov, Noncommutative
counterparts of the Springer resolution. arXiv:math.RT/0604445.
 International Congress of Mathematicians. Vol. II,  1119--1144, Eur. Math. Soc., Z\"urich, 2006.


\bibitem[Be12]{Roma} R. Bezrukavnikov, On two geometric realizations of an affine Hecke algebra.
 arXiv:1209.0403.
    

\bibitem[BM81]{BoMac} W. Borho and R. MacPherson, ``Repr\'esentations des groupes de Weyl et homologie d'intersection pour
 les vari\'et\'es nilpotentes,"
  C. R. Acad. Sci. Paris S\'er. I Math.  292  (1981),  no. 15, 707--710.


\bibitem[BG02]{BG} A. Braverman and D. Gaitsgory, ``Geometric Eisenstein Series,"
Invent. math. 150 (2002), 287--384.



\bibitem[CG97]{CG} N. Chriss and V. Ginzburg,
Representation theory and complex geometry. Birkh\"auser Boston,
Inc., Boston, MA, 1997. x+495 pp.


\bibitem[EFK95]{EFK}
P. Etingof, I. Frenkel, and A. Kirillov Jr., Spherical functions on affine Lie groups, Duke Math. J. 80 (1995), 59--90.

\bibitem[FM98]{FM}
R. Friedman and J. Morgan, 
Principal $G$-bundles over elliptic curves. 
Math. Res. Lett. 5 (1998), no. 1-2, 97Ð118.

\bibitem[FMW97]{FMW}
R. Friedman, J. Morgan and E. Witten, 
Vector bundles and F theory, Comm. Math. Phys. 187 (1997), 679Ð743. 

\bibitem[G83]{Ginz}
V. Ginzburg, ``Int\'egrales sur les orbites nilpotentes et repr\'esentations des
 groupes de Weyl," C. R. Acad. Sci. Paris SŽr. I Math.  296  (1983),  no. 5, 249--252.


\bibitem[G89]{ginzburg} V. Ginzburg, 
Admissible modules on a symmetric space.
Orbites unipotentes et repr\'esentations, III.
Ast\'erisque No. 173-174 (1989), 9--10, 199--255. 


\bibitem[G12]{isospectral} V. Ginzburg, Isospectral commuting variety, the Harish-Chandra D-module, and principal nilpotent pairs. Duke Math. J. 161 (2012), no. 11, 2023Ð2111.

\bibitem[Gr98]{Gr} M. Grinberg. ``A generalization of Springer theory using nearby cycles."
 Represent. Theory  2  (1998), 410--431 (electronic).

\bibitem[Gu]{sam} S. Gunningham. Categorified Harmonic Analysis on Complex Reductive Groups, Ph.D.~Thesis, Northwestern,
2013.


\bibitem[H82]{Hotta} R. Hotta, ``On Springer's representations,"
 J. Fac. Sci. Univ. Tokyo Sect. IA Math.  28  (1981),  no. 3, 863--876 (1982).

\bibitem[HK84]{HK} R. Hotta and M. Kashiwara, 
``The invariant holonomic system on a semisimple Lie algebra,"
 Invent. Math.  75  (1984),  no. 2, 327--358.


\bibitem[KL80]{KL} D. Kazhdan and G. Lusztig, ``A topological approach to Springer's representations,"
 Adv. in Math.  38  (1980),  no. 2, 222--228.







\bibitem[KL87]{KL} D.  Kazhdan and G. Lusztig, Proof of the
  Deligne-Langlands conjecture for Hecke algebras.  Invent. Math. 87
  (1987), no. 1, 153–215.



\bibitem[La98]{La}
Y. Laszlo, 
About $G$-bundles over elliptic curves. 
Ann. Inst. Fourier (Grenoble) 48 (1998), no. 2, 413Ð424. 

\bibitem[Lau88]{Laumon} G. Laumon,
``Faisceaux automorphes li\'es aux s\'eries d'Eisenstein." Automorphic forms, Shimura varieties, and L-functions, Vol. I (Ann Arbor, MI, 1988), 227Ð281,
Perspect. Math., 10, Academic Press, Boston, MA, 1990.


\bibitem[Lo76]{Lo} E. Looijenga, Root systems and elliptic curves, Invent. Math., 38 (1976), 17Ð32. 

\bibitem[L81]{L} G. Lusztig,  ``Green polynomials and singularities of unipotent classes,"
 Adv. in Math.  42  (1981),  no. 2, 169--178.



\bibitem[L85]{character 1} G. Lusztig, Character sheaves I. Adv. Math 56 (1985) no. 3, 193-237.



\bibitem[M84]{Mac} R. MacPherson, ``Global questions in the topology of singular spaces,"
 Proceedings of the International Congress of Mathematicians, Vol. 1,
 2 (Warsaw, 1983), 
 213--235, PWN, Warsaw,  1984. 
 
 \bibitem[MV88]{MV} I. Mirkovi\'c and K. Vilonen, 
Characteristic varieties of character sheaves.  Invent. Math.  93  (1988),  no. 2, 405--418.



\bibitem[N11]{Fukaya} D. Nadler, Springer theory via the Hitchin fibration.
 arXiv:0806.4566.  Compos. Math. 147 (2011), no. 5, 1635Ð1670.
 
 
\bibitem[Ng\^o10]{Ngo} Ng\^o  B. C.,
Le lemme fondamental pour les alg\`ebres de Lie.  
Publ. Math. Inst. Hautes \'Etudes Sci. No. 111 (2010), 1--169. 


\bibitem[R75]{R75} A. Ramanathan, Stable principal bundles on a compact Rieman surface, Math. Ann., 213 (1975), 129Ð152.

\bibitem[R96]{R96} A. Ramanathan, Moduli for principal bundles over algebraic curves, I and II, Proc. Indian Acad. Sci. Math. Sci., 106 (1996), 301Ð328 and 421Ð449.

\bibitem[R]{Rider}
L. Rider, Formality for the nilpotent cone and a derived Springer correspondence,
 arXiv:1206.4343.

\bibitem[S11]{S1} O. Schiffmann, Spherical Hall algebras of curves and Harder-Narasimhan stratas. J. Korean Math. Soc. 48 (2011), no. 5, 953Ð967. 
 

\bibitem[S12]{S2} O. Schiffmann, On the Hall algebra of an elliptic curve, II. Duke Math. J. 161 (2012), no. 9, 1711Ð1750. 


\bibitem[SV]{SV1} O. Schiffmann and E.Vasserot, The elliptic Hall algebra and the equivariant K-theory of the Hilbert scheme of $\mathbb{A}^2$.
arXiv:0905.2555
 
\bibitem[SV11]{SV2} O. Schiffmann and E.Vasserot,The elliptic Hall algebra, Cherednik Hecke algebras and Macdonald polynomials. Compos. Math. 147 (2011), no. 1, 188Ð234.  arXiv:0802.4001




\bibitem[SV12]{SV3} O. Schiffmann and E.Vasserot, Hall algebras of curves, commuting varieties and Langlands duality. Math. Ann. 353 (2012), no. 4, 1399Ð1451. arXiv:1009.0678.




\bibitem[Spr76]{Spr76} T. A. Springer, ``Trigonometric sums, Green functions of finite groups and
 representations of Weyl groups."
 Invent. Math.  36  (1976), 173--207.
 
\bibitem[Spr78]{Spr78} T. A. Springer, ``A construction of representations of Weyl groups."
 Invent. Math.  44  (1978),  no. 3, 279--293.

\bibitem[Spr82]{Spr82} T. A. Springer, ``Quelques applications de la cohomologie d'intersection."
 Bourbaki Seminar, Vol. 1981/1982, 
 pp. 249--273, AstŽrisque, 92--93, Soc. Math. France, Paris,  1982. 




 \end{thebibliography}
\end{document}